\documentclass[11pt]{amsart}
\usepackage{amsmath, amssymb}
\usepackage{upgreek}
\usepackage{xcolor, url}
\usepackage[all]{xy}
\usepackage{graphicx}
\usepackage{bm}
\usepackage{enumerate}
\theoremstyle{plain}
\newtheorem{theorem}{Theorem}[section]

\newtheorem{prop}[theorem]{Proposition}

\newtheorem{proposition}[theorem]{Proposition}
\newcommand{\td}{\text{d}}
\theoremstyle{definition}
\newtheorem{remark}[theorem]{Remark}
\newtheorem{definition}[theorem]{Definition}

\newtheorem{example}[theorem]{Example}
\numberwithin{equation}{section}

\DeclareMathOperator{\sgn}{sgn}

\usepackage{subfig, tikz}
\usetikzlibrary{decorations.pathmorphing}
\usepackage{graphics}

\usepackage{hyperref} 
\hypersetup{colorlinks=true, linkcolor=blue, citecolor = blue, urlcolor=blue, filecolor=magenta}

\allowdisplaybreaks

\begin{document}
\title[On the existence of toric ALE and ALF gravitational instantons]{On the existence of toric ALE and ALF gravitational instantons}
\author{Hari K. Kunduri}
\address[Hari K. Kunduri]{Department of Mathematics and Statistics and Department of Physics and Astronomy\\
		McMaster University\\
		Hamilton, ON Canada}
	\email{kundurih@mcmaster.ca}
    \author{James Lucietti}
\address[James Lucietti]{School of Mathematics and Maxwell Institute for Mathematical Sciences\\
		University of Edinburgh\\
		James Clerk Maxwell Building, Edinburgh, EH9 3FD, United Kingdom}
	\email{j.lucietti@ed.ac.uk}

\begin{abstract}
We establish existence and uniqueness results for asymptotically locally Euclidean (ALE) and asymptotically locally flat (ALF) gravitational instantons. In particular, we prove the existence of a unique, Ricci-flat, toric ALE and ALF gravitational instanton, for every  admissible rod structure, that is smooth up to possible conical singularites. We also give an elementary proof that any  toric ALE or ALF self-dual instanton is a multi-Eguchi-Hanson or multi-Taub-NUT solution.
\end{abstract}

\maketitle
\section{Introduction}

A gravitational instanton is a four-dimensional complete Riemannian manifold $(M, \mathbf{g})$ that is a solution to the Einstein equations with a prescribed curvature decay. These were introduced by Hawking in his study of Euclidean quantum gravity~\cite{Hawking:1976jb}. Two notable classes are the so-called asymptotically locally Euclidean (ALE) and asymptotically locally flat (ALF) instantons. ALE instantons have quartic volume growth and  approach $\mathbb{R}^4/\Gamma$ where $\Gamma$ is a finite subgroup of $O(4)$, whereas ALF instantons have cubic volume growth and a boundary at infinity of topology $S^1 \times S^2$ or $S^3/\Gamma$.  Most of the studies to date have concerned hyper-K\"ahler instantons, which have been  classified by the volume growth at infinity: ALE (quartic), ALF (cubic), ALG, ALG$^*$,  ALH  or ALH$^*$  (quadratic or slower)~\cite{Minerbe:2010yrr, ChenChen, SunZhang}. In particular, for  hyper-K\"ahler instantons, the ALE case is given by Kronheimer's construction on minimal resolutions of $\mathbb{C}^2/\Gamma$ for any $\Gamma\subset SU(2)$~\cite{Kronheimer:1989zs, Kronheimer:1989pu}, whereas Minerbe has proven that the ALF instantons have a boundary at infinity which is an $S^1$-bundle over $S^2$  (type $A_k$) or $\mathbb{RP}^2$ (type $D_k$) and  that ALF-$A_k$ instantons are exhausted by the multi-Taub-NUT solution~\cite{Minerbe, Minerbe:2009gj}.

Much less is known about generic  gravitational instantons. 
It is interesting to  consider the class of Ricci-flat asymptotically flat (AF) instantons, which corresponds to the subclass of ALF where the boundary at infinity is $S^1\times S^2$. Notably, it includes the Riemannian version of the Kerr solution, which by analogy to the black hole no-hair theorem had been conjectured to be the unique solution in this class~\cite{Gibbons, Lapedes:1980st}. Strikingly, Chen and Teo showed that this conjecture is false, by constructing an explicit counterexample now known as the Chen-Teo instanton~\cite{Chen:2011tc}. It turns out that, like the Kerr metric, the Chen-Teo metric possesses a toric symmetry. In this case the notion of {\it rod structure}  can be defined for such instantons~\cite{Chen:2010zu}, which is data that encodes how the torus action degenerates on the axes and determines the topology of the manifold. In particular, the Kerr and Chen-Teo instantons possess different rod structures, so this provides data to distinguish these solutions. 

In a previous paper, we initiated the classification of generic, Ricci-flat, toric AF instantons~\cite{Kunduri:2021xiv}.  We showed that one can formulate the Einstein equations as a harmonic map and use this to prove that there exist a unique toric AF instantons for any admissible rod structure, although it may still suffer from conical singularities. While the harmonic map reduction of the Einstein equation is well-known in the context of stationary and axisymmetric spacetimes, we stress that for toric gravitational instantons a different (more direct) reduction  is required to prove these results. Therefore, this work left open the interesting question of what rod structures lead to smooth instantons. In a remarkable recent paper, Li and Sun have used this harmonic map formulation to prove the existence of an infinite class of new AF instantons that are free of conical singularities and hence everywhere smooth~\cite{LiSun}. Nevertherless, the complete classification of toric AF instantons remains a notable open problem.

Curiously, the Kerr and Chen-Teo instantons also possess special geometry: they are Hermitian non-K\"ahler~\cite{Aksteiner:2021fae}. Motivated by this, Biquard and Gauduchon derived an explicit classification of  toric ALF Hermitian non-K\"ahler instantons~\cite{Biquard:2021gwj}. This revealed a finite list: Kerr, Chen-Teo, Taub-NUT and Taub-Bolt. Furthermore, Li has shown that an ALF Hermitian non-K\"ahler instanton must be toric, therefore completing their classification~\cite{LiALF}. It has been conjectured that any ALF instanton must be Hermitian (K\"ahler or non-K\"ahler)~\cite{Aksteiner:2023djq}. However, the aforementioned Li-Sun instantons are necessarily non-Hermitian and hence these provide counterexamples to this conjecture for the AF subclass. Nevertheless, the conjecture remains open in the generic ALF class.  On the other hand, there is an older more stringent conjecture which states that  any  Ricci-flat ALE instanton must be (locally) hyper-K\"ahler~\cite{Gibbons, BKN}. Support for this conjecture has been obtained under a variety of topological and geometrical assumptions: for spin manifolds with $\Gamma\subset SU(2)$~\cite{Nakajima}, for Hermitian non-K\"ahler manifolds with $\Gamma \subset SU(2)$~\cite{Li2023}, and for toric Hermitian non-K\"ahler instantons~~\cite{Araneda:2025uqo}. 

Motivated by these conjectures, we will initiate the classification of generic toric ALE and ALF gravitational instantons.  In particular, the purpose of this note is to establish an existence and uniqueness result analogous to the AF case~\cite{Kunduri:2021xiv}. Our main result is as follows.

\begin{theorem}
\label{th:exist}
There exists a unique toric ALE or ALF gravitational instanton for every admissible rod structure, that is smooth everywhere, up to possible conical singularities at the axes.
\end{theorem}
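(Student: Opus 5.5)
We follow the strategy of our earlier treatment of the AF case~\cite{Kunduri:2021xiv} and reduce the problem to a harmonic map with prescribed singularities. Let $\partial_{\phi^1},\partial_{\phi^2}$ generate the toric action and write the metric as
\[
\mathbf{g} = \frac{e^{2\nu}}{\rho^{2}}\bigl(d\rho^2+dz^2\bigr)\cdot \rho^{2}\det(G)^{-1}\,\ldots + G_{ij}\,d\phi^i d\phi^j ,
\]
where $G_{ij}=\mathbf{g}(\partial_{\phi^i},\partial_{\phi^j})$. The precise form is the Weyl–Papapetrou form with $\det G=\rho^2$, which holds for Ricci-flat toric metrics since $\sqrt{\det G}$ is harmonic on the orbit space. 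The Ricci-flat equations then split into two parts. The first is the statement that $\Phi=\rho^{-1}G$ defines an axisymmetric harmonic map $\Phi:\mathbb{R}^3\setminus\Gamma\to SL(2,\mathbb{R})/SO(2)\cong\mathbb{H}^2$, with $\Gamma$ the $z$-axis. The second is a pair of quadratures determining $\nu$, whose integrability is guaranteed by the harmonic map equations. Here the reduction is the ``direct'' one, using the Killing fields themselves rather than a twist potential, as we stressed in the introduction. The rod structure (the rods $(z_{a-1},z_a)$ and their kernel vectors $v_a\in\mathbb{Z}^2$) prescribes the singular behaviour of $\Phi$ along $\Gamma$. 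On each rod, the combination $G v_a$ vanishes like $\rho^2$. The asymptotic ALE or ALF data prescribes the behaviour of $\Phi$ at infinity.

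Step one is to construct an explicit \emph{model map} $\Phi_0$ with exactly these singularities. For this we use sums of the standard potentials $\tfrac12\log(r_a\pm(z-z_a))$ with $r_a=\sqrt{\rho^2+(z-z_a)^2}$, conjugated by the $SL(2,\mathbb{Z})$ change of basis that adapts to each rod vector. We arrange $\Phi_0$ to agree asymptotically with flat $\mathbb{R}^4/\Gamma$ in the ALE case, or with the appropriate ALF model (Taub-NUT-like, with semi-infinite rods of directions fixing the circle fibration) in the ALF case. We then check that the tension $\tau(\Phi_0)$ is bounded near the axis and decays at infinity fast enough that $\int|\tau(\Phi_0)|<\infty$. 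Step two is to invoke Weinstein's existence and uniqueness theorem for harmonic maps into hyperbolic space with prescribed singularities on an axis. This is the result used in~\cite{Kunduri:2021xiv}, ultimately resting on the Hildebrandt–Kaul–Widman/Li–Tam heat-flow and Dirichlet exhaustion argument together with negative curvature of the target. It yields a unique harmonic $\Phi$ at bounded distance from $\Phi_0$. Uniqueness comes from subharmonicity of $d_{\mathbb{H}^2}(\Phi,\Phi')$ plus boundedness and decay.

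Step three is to reconstruct $\nu$ by line integration and show that the resulting metric extends smoothly across the interiors of rods and across rod endpoints (turning points). The exception is a possible constant angular excess on each finite rod, which is the conical singularity allowed in the statement. We also verify the ALE/ALF asymptotics. Regularity near the axis follows by showing that $d(\Phi,\Phi_0)\to 0$ at the axis with enough derivatives, using Schauder estimates for the lifted map on $\mathbb{R}^4$ or on the model neighbourhood, as in~\cite{Kunduri:2021xiv}. The constancy of $\nu$-jumps along each rod follows from the quadrature equation restricted to $\rho=0$.

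The main obstacle I expect is the asymptotic analysis distinguishing the ALE and ALF cases. Two things must be shown: that the model map's tension is integrable at infinity, and that the harmonic map's deviation from $\Phi_0$ decays fast enough for the metric to have the correct quartic or cubic volume growth and the curvature decay. In the ALF case one circle stays bounded, so $\Phi_0$ is asymptotically degenerate in one direction. I would first try to build $\Phi_0$ as an exact Taub-NUT- or flat-orbifold-type solution outside a compact set, perturbed by compactly supported rod data, so that the tension is compactly supported. The decay of $d(\Phi,\Phi_0)$ would then follow from a barrier comparison with Green's-function-type harmonic functions on $\mathbb{R}^3$.
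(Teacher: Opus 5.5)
Your existence argument has the same architecture as the paper's. You reduce directly to the axisymmetric harmonic map $\Phi=\rho^{-1}g$ into $SL(2,\mathbb{R})/SO(2)$. You use a model map that coincides with an exact flat-lens-space or Taub-NUT solution outside a compact set; the paper does exactly this, with the Gibbons--Hawking model $H=c+1/r$ quotiented to $L(p,q)$. Then you apply Weinstein's theorem.

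The genuine gap is in the uniqueness half. Weinstein's uniqueness only says there is one harmonic map \emph{asymptotic} to your fixed $\Phi_0$. It does not say that an arbitrary toric ALE/ALF instanton with the given rod structure is that map. For this you need two ingredients you do not supply.

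First, harmonicity of $\rho$ only gives Weyl--Papapetrou coordinates locally. You must show they form a global chart on the interior of the orbit space, so that the instanton is a harmonic map on all of $\mathbb{R}^3\setminus\Gamma$ at all. The paper does this using the simply connected asymptotic strip, uniformisation of $\{0<\rho<\rho_0\}$ onto a strip, and the maximum principle applied to $\rho-\mathrm{Im}\,w$.

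Second, you must compare two solutions directly. In a basis adapted to each rod, the Mazur distance is $\Psi=((h-\tilde h)^2+\rho^2(w-\tilde w)^2)/(h\tilde h)$. This extends smoothly across the axis when the rod structures agree, so $\Psi$ is subharmonic on $\mathbb{R}^3$. At infinity it tends to $0$ in the ALE case, but to $(N^2-\tilde N^2)^2/(N^2\tilde N^2)$ in the ALF case. So in the ALF case uniqueness holds only once the asymptotic parameter $|N|$ is fixed, and $|N|$ is not encoded in the rod structure. Your statement of a ``unique harmonic $\Phi$ at bounded distance from $\Phi_0$'' is false there. For example, the multi-Taub-NUT metrics with fixed centres $z_i$ and varying constant $c>0$ in $H$ share a rod structure and sit at bounded but non-decaying distance from one another. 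Your proposal never identifies $N$ as extra data.

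On existence, your plan is right in outline but omits what makes the ALE/ALF case harder than AF. Near the semi-infinite rods, the exterior model written in $2\pi$-periodic angles via the lens-space change of basis has a non-diagonal Gram matrix ($\bar g_{12}\neq 0$ for Taub-NUT). It therefore cannot be glued to the interior pieces $M_{i-1}D_{i-1}M_{i-1}^T$ just by interpolating logarithmic potentials conjugated by constant matrices.

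The paper interpolates across $\mathcal{T}_1$ and $\mathcal{T}_{n+1}$ with $g=N(z)\,H\,N(z)^T$. Here $H$ is parametrised by functions $A>0$ and $B$ so that $\det H=\rho^2$ and the correct rod vector stays in the kernel on the axis. The tension stays bounded only because $A$ and $B$ are chosen to be smooth functions of $\rho^2$. Otherwise terms like $\partial_\rho A/(A\rho)$ in $\nabla\cdot(g^{-1}\nabla g)$ blow up at the axis. Saying you will check bounded tension near the axis does not substitute for a construction in which it actually holds.
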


The method of proof is entirely analogous to the AF case. In particular, we use the harmonic map formulation of the Einstein equations and a theorem of Weinstein which reduces the existence problem for the harmonic map to the construction of a `nearby' model map that shares the same boundary conditions~\cite{Weinstein:2019zrh}. It turns out that the construction of a suitably general ALE or ALF  model map is more delicate than in the AF case and most of this note is dedicated to constructing this explicitly.\footnote{For a restricted class of ALE and ALF asymptotics, namely those with an $S^3$ boundary at infinity, this existence result was asserted without proof in~\cite{LiSun}. However, in the toric setting the boundary at infinity is a general lens space.} Of course, as in the AF case, our theorem does not deal with the conical singularities and it is possible that all non-Hermitian examples are necessarily conically singular, in line with the above conjectures. In fact, topological constraints on the possible rod structure for ALE and ALF instantons have been derived~\cite{Nilsson:2023ina}. On the other hand, in light of the Li-Sun construction of AF instantons, it is possible that non-Hermitian instantons may also exist in the ALE or ALF case. We leave this interesting question to future work.

It is also  worth noting that, in contrast to the hyper-K\"ahler case, for generic gravitational instantons there is no classification of asymptotic types even in the toric case. Indeed, it is known there are toric instantons that are asymptotically Kasner such as the Myers-Korotin-Nicolai solutions~\cite{Khuri:2022xdy}. It would be interesting to determine all possible asymptotic types for Ricci-flat toric instantons and establish the analogue of the above theorem for each type.

We also develop an asymptotic expansion for toric ALF instantons. In particular, we determine the explicit leading order asymptotics and show that it is specified by three asymptotic invariants that may be identified as Riemannian analogues of a mass, NUT-charge and angular momentum.  Indeed, in the case of vanishing NUT-charge this reduces to the  mass and angular momentum previously derived for AF instantons~\cite{Kunduri:2021xiv}.  While these asymptotic expansions are not needed to establish the uniqueness and existence theorems, it may be interesting to study ALF generalisations of the mass and angular momentum identities established for AF instantons~\cite{Kunduri:2021xiv}.

Finally, for completeness, we give an elementary proof that shows that any self-dual toric ALE or ALF  instanton must be the multi-Eguchi-Hanson or  multi-Taub-NUT instanton.  This follows from the well-known fact that a self-dual toric instanton is locally a Gibbons-Hawking metric, together with a global analysis of this family of metrics that exploits the rod structure. Of course, these results must follow from the aforementioned general classification of hyper-K\"ahler ALE and ALF restricted to the toric case, however, it is instructive to analyse these directly in the toric setting as they provide a large class of examples where the conical singularities can be explicitly removed.

The organisation of this paper is as follows. In Section \ref{sec2} we introduce toric gravitational instantons  and develop an asymptotic model for ALE and ALF asymptotics. In Section \ref{sec3} we introduce the harmonic map formulation and prove our main theorem. In Section \ref{sec:SD} we show how to determine all self-dual toric ALE and ALF instantons. \\

\paragraph{\bf Acknowledgments} We thank the Simons Centre for Geometry and Physics for hospitality where part of this work was conducted.  We thank Bernardo Araneda and Marcus Khuri for many helpful discussions and comments on a draft of this work. H. K. is supported by NSERC Discovery Grant RGPIN-2025-06027.

\section{Toric ALE and ALF gravitational instantons}
\label{sec2}

\subsection{Einstein equations and rod structure}  We begin by introducing the  basic definitions and properties of toric instantons,  as discussed in more detail in~\cite{Kunduri:2021xiv}. We then introduce the asymptotic types that we will consider. 

\begin{definition}
    A {\it toric gravitational instanton} is a complete, simply connected, Riemannian manifold $(M,\mathbf{g})$ that is Ricci flat, which admits an (effective) isometric torus $T$ action with at least one fixed point. We assume this action has no points with discrete isotropy subgroups.
\end{definition}

It has been shown that under these assumptions the orbit space $\hat M =M/T$ is a 2-dimensional simply connected manifold with boundaries and corners~\cite[Proposition 1]{Hollands:2007aj} (see also the classic work~\cite[Theorem 1.12]{OR}).  The Gram matrix of the Killing fields $\eta_i$, $i=1,2$ generating the $T$-action is the $2\times 2$ symmetric matrix $g_{ij}:= \mathbf{g}(\eta_i, \eta_j)$. The interior, boundaries, and corners, of $\hat{M}$ correspond to points where $g$ is rank-2, rank-1 and rank-0 respectively.  It is a standard result~\cite{Kunduri:2021xiv} that Ricci flatness of $(M, \mathbf{g})$ implies that the $T$-isometry with a fixed point is orthogonally transitive and hence the orthogonal surfaces, which we may identify with the orbit space, possess an induced metric $\hat{g}$.  In particular, Ricci-flatness implies that the Gram matrix viewed as a function on $(\hat M, \hat g)$, satisfies
\begin{equation}\label{eq:geq}
    \td (\rho \hat\star g^{-1} \td g)=0
\end{equation}
where $\rho:= \sqrt{\det g}$ and $\hat \star$ is the Hodge dual with respect to the metric $\hat{g}$.  Taking the trace of this equation implies that $\rho$ is harmonic and hence we can introduce its harmonic conjugate by $\td z = - \hat\star \td \rho$. Observe that since $\hat M$ is simply connected $z$ is a globally defined function on the orbit space. Thus wherever $\td \rho \neq 0$ we can use $(\rho, z)$ as local coordinates on the orbit space and write $\hat g = e^{2\nu} (\td \rho^2+ \td z^2)$ for some function $\nu$. These coordinates can be pulled-back to $M$ to give the well-known Weyl-Papapetrou coordinates $(\rho, z, \phi^i)$ with $i=1,2$, 
\begin{equation}
\mathbf{g}= g_{ij} \td \phi^i \td \phi^j +  e^{2\nu} (\td \rho^2+ \td z^2)
\end{equation}
where $\eta_i = \partial_{\phi^i}$. The remaining components of the Einstein equations imply
\begin{equation}\label{eq:nuint}
\partial_z \nu = \frac{\rho}{4} \text{Tr}( J_z J_\rho), \qquad \partial_\rho \nu = - \frac{1}{2\rho}+ \frac{\rho}{8} \text{Tr}( J_\rho^2- J_z^2)
\end{equation}
where $J:= g^{-1} \td g$, and the integrability condition these is precisely \eqref{eq:geq}.

In fact, for the asymptotics that we will consider we will show that the Weyl-Papapetrou coordinates is a global chart on $M$ away from the axes (i.e. at points corresponding to the interior of $\hat{M}$).  We first introduce the asymptotics of interest in a general setting.

\begin{definition}
A Riemannian manifold $(M, \mathbf{g})$ is ALE  of order $\tau>0$ if~\cite{Araneda:2025uqo}:
\begin{itemize}
    \item It has an end  diffeomorphic to $\mathbb{R}\times S$ where $S= S^3/\Gamma$ and $\Gamma$ is a finite subgroup of $O(4)$.
    \item $S$ admits a Riemannian metric $\gamma$ is constant curvature $+1$.
    \item On the end
    \begin{equation}
        \mathbf{g} = \td r^2+ r^2 \gamma + h
    \end{equation}
    where  $|\nabla^k h| = O(r^{-\tau-k})$ as $r \to \infty$ and the norm $||$ and $\nabla$ is with respect to the asymptotic flat metric $\td r^2+ r^2 \gamma$. It is worth noting that Ricci-flatness implies $\tau=4$~\cite{BKN}.
\end{itemize}
\end{definition}

\begin{definition}
A Riemannian manifold $(M, \mathbf{g})$ is ALF if~\cite{Biquard:2021gwj}:
\begin{itemize}
    \item It has an end  diffeomorphic to $\mathbb{R}\times S$ where $S$ is either $S^1\times S^2$ or $S^3/\Gamma$ and $\Gamma$ is a finite subgroup of $O(4)$.
    \item $S$ admits a 1-form $\eta$, a vector field $\xi$ such that $\eta(\xi)=1$, $\iota_\xi \td \eta=0$, and a $\xi$-invariant  metric $\gamma$ on $\text{ker}\,\eta$ of constant curvature $+1$ extended to $TS$ so $\gamma(\xi, \cdot)=0$.
    \item On the end
    \begin{equation}
        \mathbf{g} = \td r^2+ r^2 \gamma + \eta^2+ h
    \end{equation}
    where  $|\nabla^k h| = O(r^{-1-k})$ as $r \to \infty$ and the norm $||$ and $\nabla$  is with respect to the asymptotic  metric $\td r^2+ r^2 \gamma+\eta^2$.  These conditions imply $\xi$ is a Killing field of this asymptotic metric.
\end{itemize}
\end{definition}

We must now define what it means for a toric instanton to be one of these asymptotic types, which requires a compatibility condition of the asymptotic structures with the toric symmetry.

\begin{definition}
A Riemannian manifold $(M, \mathbf{g})$ that is {\it toric} (i.e. admits an effective torus $T$ isometric action) is:
\begin{itemize}
    \item Toric ALE if the $T$-action is an action on $S$ that preserves $\gamma$.
    \item Toric ALF if the $T$-action is an action on $S$ that preserves $(\eta, \xi , \gamma)$ and $\xi$ generates a subgroup of the $T$-action.
\end{itemize}
The torus action implies $\Gamma$ must be a cyclic group so in the spherical case $S$ is a lens space $L(p,q)$.
\end{definition}

In order to make use of the harmonic map formulation of the Einstein equations, we need to establish that Weyl coordinates are global in our context. The proof follows an old argument of Weinstein for stationary and axisymmetric black holes~\cite{wein}.

\begin{proposition}
    Let $(M, \mathbf{g})$ be a toric ALE or ALF gravitational instanton. Then, the Weyl-Papapetrou coordinates are a global chart on the interior of the orbit space $\hat M$.
\end{proposition}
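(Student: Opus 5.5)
We follow Weinstein's argument for stationary and axisymmetric black holes. The claim is that the map $\Phi=(\rho,z):\mathrm{int}\,\hat M\to \{\rho>0\}\subset\mathbb{R}^2$ is a diffeomorphism onto the open half-plane. Since $\hat M$ is a simply connected surface with boundary and corners, we first compactify it and treat $w:=z+i\rho$ as a holomorphic function with respect to the conformal structure of $\hat g$; this is possible because $\td z=-\hat\star\td\rho$ says exactly that $\rho,z$ satisfy the Cauchy--Riemann equations. The goal is to show $w$ is a proper, injective holomorphic map from the (uniformised) interior of $\hat M$ onto the upper half-plane $\mathbb{H}$. Its derivative is then nonvanishing, which gives $\td\rho\neq0$ everywhere in the interior, so $(\rho,z)$ are global coordinates.

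\textbf{Step 1: boundary behaviour of $\rho$.} On the boundary of $\hat M$ (axes and corners) the Gram matrix has rank $\le1$, so $\rho=\sqrt{\det g}=0$ there, while $\rho>0$ in the interior. Near an axis, smoothness of $\mathbf{g}$ together with the linear vanishing of the degenerating Killing field gives $\rho\sim \mathrm{dist}(\cdot,\text{axis})$ times a positive function. Hence $\td\rho\ne0$ on the interior of each axis segment, $z$ restricted to the axis is strictly monotone, and $w$ maps the axes to the real line. A local model computation at corners (two axes meeting, locally $\mathbb{R}^4$ with a linear torus action) shows $w$ behaves like $\zeta^2$ in a local complex coordinate. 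Since it opens a right angle into a straight angle, $w$ stays locally injective up to the boundary.

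\textbf{Step 2: asymptotics.} At the end $\mathbb{R}\times S$ we use the explicit flat models. In the ALE case, for $\mathbb{R}^4/\mathbb{Z}_p$ with toric structure, $\rho=\tfrac12 r^2\sin\theta\cos\theta\cdot c$ and $z=\tfrac12 r^2\cos2\theta\cdot c$ (up to constants from the lattice). In the ALF case the model is $\mathbb{R}^3\times S^1$, or Taub-NUT-like with $\rho\sim r\sin\theta$ and $z\sim r\cos\theta$. Using the decay of $h$ ($O(r^{-\tau})$ or $O(r^{-1})$ with derivatives), we show $\rho=\rho_0+o(\cdot)$ and $z=z_0+o(\cdot)$, where $w_0=z_0+i\rho_0$ is the model map. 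This requires checking that the harmonic conjugate of the perturbed $\rho$ differs from $z_0$ by a lower-order term plus a constant; the constant can be absorbed. Consequently $|w|\to\infty$ exactly at the end of $\hat M$, so $w$ is proper.

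\textbf{Step 3: conclusion.} By Steps 1--2, $w$ extends to a continuous proper map from the closure of $\hat M$ (a closed disc with one boundary point at infinity) to $\overline{\mathbb{H}}\cup\{\infty\}$, and it restricts to a monotone map, hence a homeomorphism, from $\partial\hat M$ onto $\mathbb{R}\cup\{\infty\}$. A holomorphic map that is proper and a boundary homeomorphism has degree one by the argument principle. It is therefore a biholomorphism onto $\mathbb{H}$. In particular $\td\rho\neq0$ in the interior, and $(\rho,z,\phi^i)$ is a global chart.

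\textbf{Main obstacle.} The delicate step is Step 2. One must control the harmonic conjugate $z$ at infinity using only the curvature-decay definitions, uniformly across the different asymptotic sectors, including where the axes reach infinity. This matters especially in the ALF case, where $\rho$ grows only linearly and the $\eta^2$ direction must be identified with a fixed lattice vector of $T$. The first approach to try is to write $\rho$ as $\rho_0$ plus a decaying harmonic correction in the model half-plane coordinates. Then apply the maximum principle to $\rho-\rho_0$ on large regions, which vanishes on the axes, to get $\rho/\rho_0\to1$. Finally, integrate $\td z=-\hat\star\td\rho$ along large arcs to obtain the matching estimate for $z$.
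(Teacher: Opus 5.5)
Your argument is correct in outline, but it takes a genuinely different route from the paper's.

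\textbf{The paper's route.} The paper uses asymptotic information only about $\rho$. For large $\rho_0$ the level set $\{\rho=\rho_0\}$ lies in the asymptotic strip, so $\{\rho>\rho_0\}$ is simply connected. Hence, since $\hat M$ is simply connected, so is $\{0<\rho<\rho_0\}$. The paper then uniformises this region onto the strip $0<\mathrm{Im}\,w<\rho_0$, with the axis and the level curve going to the two boundary lines. It concludes $\rho=\mathrm{Im}\,w$ because $\rho-\mathrm{Im}\,w$ is a bounded harmonic function vanishing on both boundary lines, so $z+i\rho$ is itself the uniformising coordinate. No corner analysis is needed, and no information at all about the harmonic conjugate $z$ at infinity.

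\textbf{Your route.} Your degree/argument-principle approach requires $w$ to be proper. Where $\rho$ stays bounded, that forces you to prove $|z|\to\infty$ near the axes at infinity, which is exactly the step you flag as the main obstacle. In return, your route shows directly that $z$ is strictly monotone along $\partial\hat M$. This makes explicit the identification of the rods with intervals $(z_{i-1},z_i)$ that the paper uses immediately after the proposition.

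Monotonicity across corners is in fact automatic. Since $\td z=-\hat\star\td\rho$ and $\td\rho$ is the inward conormal on every rod, $z$ increases in the same direction along each rod. The degree can also be computed at an interior point of a rod. So your $\zeta^2$ corner model is not really needed.

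\textbf{A correction to Step 2.} The maximum principle for $\rho-\rho_0$ is unnecessary. It is also not literally available, since $\rho$ and $\rho_0$ are harmonic for different conformal structures. Because $\rho=\sqrt{\det g}$ is read off algebraically from the Gram matrix, $\rho=\rho_0(1+o(1))$ follows directly from the decay of $h$. This holds uniformly up to the axes, since that decay is measured in the model metric. The only real work is integrating $\td z=-\hat\star\td\rho$ to get $z-z_0=o(|w_0|)$, for instance $O(\log r)$ in the ALF case, and this suffices for properness.
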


\begin{proof}
    This has been already proven in the AF case~\cite{Kunduri:2021xiv} and the ALE case~\cite{Araneda:2025uqo}. The proof for the ALF case goes through with minimal changes, but we give it here for completeness.  The definition of toric ALF implies that the orbit space has an asymptotic end $\mathbb{R}\times S/T$ where the orbit space $S/T$ is a closed interval and the matrix of Killing fields of the asymptotic metric $\sigma_r:= \eta^2+ r^2 \gamma$ induced on $S$ is rank-1 at the endpoints.  Thus the asymptotic end of the orbit space is a strip, which is simply connected and the Gram matrix with respect to the Killing fields $\xi, K$, where $K\in \text{ker}\, \eta,$ is $\text{diag}( 1, r^2 \gamma(K, K))$. Hence,  $\rho = c r \sqrt{\gamma(K, K)} (1+ O(r^{-1}))$ for some constant $c$ related to a change of basis of the torus Killing fields. Therefore, for large enough $\rho_0>0$, the curve $\rho=\rho_0$ is in the asymptotic end, which implies the region $\rho>\rho_0$ is simply connected.  But since $\hat M$ is simply connected, this implies the complement $0<\rho <\rho_0$ is also simply connected. Hence, by the uniformisation theorem we can conformally map the region $0< \rho <\rho_0$ to the strip $0< \text{Im}\, w < \rho_0$ in the complex $w$-plane such that $\rho=0$ corresponds to $\text{Re} \, w=0$ and $\rho=\rho_0$ to $\text{Im} \, w = \rho_0$. Therefore, $\rho - \text{Im}\, w $ is harmonic in this strip of the $w$-plane and vanishes at its boundaries, so by the maximum principle $\rho = \text{Im} \, w$. Therefore, $w= z+ i \rho$ is a global holomorphic coordinate on the orbit space. 
\end{proof}

This proposition is key as it means the interior of the orbit space is diffeomorphic to the half-plane $H= \{ (\rho, z) \; | \; \rho>0\}$. Its boundary $\rho=0$ divides into intervals, called rods, $I_i=(z_{i-1}, z_{i})$ separated by points $z_i, i=1, \dots, n$ where $z_0=-\infty, z_{n+1}=\infty$. At the rods $I_i$, $i= 1, \cdots, n+1$  the Gram matrix $g$ is rank-1 with rod vector ${v}_i\in \text{ker}\, g|_{I_i}$, whereas the points $z_i$ correspond to fixed points of the $T$-action.  The {\it rod structure} of the instanton is the collection of data $\{ (I_i, {v}_i)\}$ and we normalise the rod vectors so they are $2\pi$-periodic.  It is often convenient to use a $2\pi$-periodic basis for the generators for the $T$-action, relative to which the rod vectors can be written as $\underline{v}_i= (v^1_i, v^2_i)\in \mathbb{Z}^2$ (coprime).  The rod structure is then said to be {\it admissible} if 
\begin{equation}
    \det (\underline{v}_i, \underline{v}_{i+1})=\epsilon_i, \qquad \epsilon_i=\pm 1
\end{equation} for all $i=1, \dots, n+1$, which ensures the absence of orbifold singularities at the fixed points $z_i$. 

Given any rod $I_i$, we can introduce independently $2\pi$-periodic torus coordinates $(\phi^1, \phi^2)$ adapted to the corresponding rod vector so $v_i = \partial_{\phi^2}$ such that the Gram matrix in this basis is
\begin{equation}\label{eq:gparam}
g= \begin{pmatrix}
    h+ \rho^2 h^{-1} w^2 & \rho^2 h^{-1} w \\ \rho^2 h^{-1} w & \rho^2 h^{-1}
\end{pmatrix}
\end{equation}
where $h>0$ and $w$ are smooth functions or $\rho^2, z$, so in particular we have $|v_i|^2 = \rho^2 h^{-1}$. Integrating \eqref{eq:nuint} near $I_i $ implies that 
\begin{equation}
e^{2\nu} = \frac{c_i^2}{h}
+ O(\rho^2) \; ,   \label{eq:nunearI}
\end{equation}
where  $c_i>0$ is a constant, so the metric near $I_i$ is 
\[
\mathbf{g} = h^{-1}[  ( c_i^2+ O(\rho^2)) \td \rho^2 + \rho^2 ( \td \phi^2+ w \td \phi^1)^2 ]+ \left( \frac{c_i^2}{h} + O(\rho^2) \right) \td z^2 + h (\td \phi^1)^2 \; .
\]
Thus there is a conical singularity as $\rho \to 0$ over $I_i$ with cone angle $\theta_i=2\pi/c_i$,  which is absent iff $c_i=1$. The finite rods $I_i$ correspond to closed 2-surfaces $C_i$ of $S^2$ topology in $M$. The induced metric on $C_i$ is
\begin{equation}
\frac{c_i^2 \td z^2}{h(z)}+ h(z) (\td \phi^1)^2   \label{eq:bolt}
\end{equation}
and since $ |\partial_{\phi^1}|^2=h$ we must have $h>0$ on the interior of $I_i$ with $h=0$ at its endpoints (since they correspond to fixed points of the torus symmetry). Thus the metric on $C_i$ extends smoothly to $S^2$ if and only if
\begin{equation}
h'(z_{i-1})=- h'(z_{i})=2 c_i  \; ,  \label{eq:smoothbolt}
\end{equation}
which is the condition for the absence of conical singularities at the endpoints of $I_i$.

\subsection{Toric ALF instantons}

It will be useful to introduce an explicit set of coordinates adapted to the asymptotic types introduced. In the ALE case this is straightforward and will be postponed to the next section.  In the ALF case this requires unpacking its general definition. The following result can also be found in~\cite{Biquard:2021gwj} (see also~\cite{Aksteiner:2021fae}). 

\begin{proposition}\label{prop:ALFend}
Let $(M, \mathbf{g})$ be an ALF instanton. There exist coordinates $(r, \theta, \phi, \tau)$ on the asymptotic end such that the metric takes the form
\[
\mathbf{g}= \td \tau^2+ \td r^2+ r^2 (\td\theta^2+  \sin^2\theta \td \phi^2 ) +O(r^{-1}) ,
\]
as $r \to \infty$.  If $(M, \mathbf{g})$ is toric ALF  then $\xi=\partial_\tau$ is a Killing vector field and we may choose coordinates on $S$ so that $K=\partial_\phi$ is also a Killing vector. 
\end{proposition}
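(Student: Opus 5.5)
The plan is to unpack the ALF definition on the end $\mathbb{R}\times S$ and pick coordinates adapted to the triple $(\eta,\xi,\gamma)$. Since $\eta(\xi)=1$ and $\iota_\xi \td\eta=0$, Cartan's formula gives $\mathcal{L}_\xi\eta=\td(\eta(\xi))+\iota_\xi\td\eta=0$. The metric $\gamma$ is $\xi$-invariant with $\gamma(\xi,\cdot)=0$, so $\xi$ is Killing for $\sigma=\eta^2+\gamma$. Hence $\xi$ generates a Riemannian flow on $S$ with horizontal distribution $\ker\eta$. The transverse metric $\gamma$ is basic, i.e. $\xi$-invariant and annihilating $\xi$, and it has constant curvature $+1$.

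\textbf{Reduction to the quotient.} I first check that the orbits of $\xi$ are closed. In the toric case this is automatic, since $\xi$ generates a subgroup of $T$ and is $2\pi$-periodic up to normalisation. In general I would appeal to the structure result quoted from~\cite{Biquard:2021gwj}. Then $S\to B:=S/\langle\xi\rangle$ is a (possibly orbifold) circle fibration, and $\gamma$ descends to a round metric on $B$. Since $B$ is $S^2$ or $\mathbb{RP}^2$, in the toric case it is $S^2$. Choose spherical coordinates $(\theta,\phi)$ on $B$ with $\gamma=\td\theta^2+\sin^2\theta\,\td\phi^2$, and a fibre coordinate $\tau$ with $\xi=\partial_\tau$. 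Then $\eta=\td\tau+A$, where $A$ is a connection 1-form pulled back from $B$ and $\td A=\td\eta$ has curvature proportional to the Chern number of the bundle (NUT charge). This gives
\[
\mathbf{g}=\td r^2+r^2(\td\theta^2+\sin^2\theta\,\td\phi^2)+(\td\tau+A)^2+h .
\]
The stated form $\td\tau^2+\dots$ is then read locally on a patch, or with $\td\tau$ understood as the connection form $\eta$. For $S=S^1\times S^2$ one has $A=0$ globally. The error $h$ is $O(r^{-1})$ by assumption.

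\textbf{Toric case.} Here $T$ preserves $(\eta,\xi,\gamma)$ and $\xi$ generates a subgroup of $T$, so $T$ descends to an isometric action of $T/\langle\xi\rangle\cong S^1$ on the round $B=S^2$. An isometric circle action on the round sphere is a rotation about some axis. Choosing the polar axis of $(\theta,\phi)$ to be that axis makes the descended generator equal to $\partial_\phi$. I would then lift $\partial_\phi$ to a vector field $K$ on $S$ lying in $T$ and commuting with $\xi$. If necessary, gauge $A$ to be $\phi$-invariant, e.g. $A=\pm\frac{k}{2}(\cos\theta\mp 1)\td\phi$ on the two hemispheres. In these coordinates $K=\partial_\phi$ holds, and both $\partial_\tau$ and $\partial_\phi$ are Killing for the full metric $\mathbf{g}$, being generators of the isometric $T$-action.

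\textbf{Main obstacle.} The delicate step is choosing $K$ within the Lie algebra of $T$ so that it is genuinely $\partial_\phi$ in the chosen trivialisation with the correct periodicity. The lift of the rotation is only defined modulo $\xi$, and $K\in\ker\eta$ is needed only asymptotically. I would handle this by fixing $\tau$ through the gauge choice above, so that $\mathcal{L}_K$ annihilates $A$ and $\tau$, and then absorbing any residual $\xi$-component of $K$ into a shift $\tau\to\tau+c\,\phi$. Such a shift changes only the gauge of $A$.
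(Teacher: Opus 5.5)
\textbf{There is a gap.} Your setup agrees with the paper. Cartan's formula gives $\mathcal{L}_\xi\eta=0$, you take coordinates with $\xi=\partial_\tau$, $\eta=\td\tau+A$ with $A$ basic, and $\gamma=\td\theta^2+\sin^2\theta\,\td\phi^2$. Your toric argument is also fine, and more explicit than the paper's, which takes that part as evident.

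The gap is in the one step that carries the content of the displayed formula. You arrive at $(\td\tau+A)^2$ but never show that it equals $\td\tau^2$ up to $O(r^{-1})$. Instead you propose reading the formula ``locally on a patch'', or with $\td\tau$ ``understood as'' $\eta$. Neither works:
\begin{enumerate}
\item[(i)] On a patch, a gauge change $\tau\to\tau+f$ only shifts $A$ by $\td f$. So $A$ cannot be removed wherever $\td A=\td\eta\neq 0$, for instance when the NUT charge is nonzero.
\item[(ii)] Reinterpreting $\td\tau$ as $\eta$ changes the statement rather than proving it.
\end{enumerate}
Likewise, your claim that $A=0$ globally when $S=S^1\times S^2$ does not follow from the definition. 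Triviality of the circle bundle only forces $\int_{S^2}\td\eta=0$, not $\td\eta=0$.

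\textbf{The missing observation} is simply that $A$ is small in the relevant norm; this is the whole of the paper's argument. The components of $A$ depend only on $(\theta,\phi)$ and are $O(1)$. With respect to the asymptotic metric $\td r^2+r^2\gamma+\eta^2$, however, $|\td\theta|=r^{-1}$ and $|\sin\theta\,\td\phi|=r^{-1}$. Hence $|A|=O(r^{-1})$, uniformly up to the poles provided $A$ is taken in a gauge regular there. It follows that
\[
(\td\tau+A)^2-\td\tau^2=2\,\td\tau\,A+A^2=O(r^{-1}),
\]
which can be absorbed into $h$. Once you add this line, your global reduction to the quotient $B=S/\langle\xi\rangle$ becomes unnecessary. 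So does the closed-orbit input you borrow from Biquard--Gauduchon in the non-toric case. The paper works purely with local coordinates adapted to $\xi$, which suffices because the angular coordinates $(\theta,\phi)$ are local anyway.
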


\begin{proof}
First note that the definition of ALF implies $\mathcal{L}_\xi\eta=0$.  Now let $(\tau, \theta, \phi)$ be coordinates on $S$ adapted so $\xi=\partial_\tau$. It follows from $\eta(\xi)=1$ that in these coordinates $\eta=\td \tau + \omega$ where $\iota_\xi\omega=0$ and $\mathcal{L}_\xi\omega=0$, so we can write $\omega = \omega_\theta(\theta, \phi) \td \theta+ \omega_\phi(\theta, \phi)\td \phi$.  Now consider the symmetric 2-tensor $\gamma$ on $S$ which by definition satisfies $\gamma(\xi, \cdot )=0$ and is also invariant under $\xi$. In these coordinates $\gamma= \gamma_{\theta\theta} \td \theta^2+ 2 \gamma_{\theta \phi} \td \theta \td \phi+ \gamma_{\phi\phi} \td \phi^2$ where all components depend on $(\theta, \phi)$, so locally is takes the form of a metric on a 2d space with coordinates $(\theta, \phi)$. It thus gives a metric on $\text{ker} \, \eta$ and the requirements it has unit curvature implies it is locally isometric to the unit round metric on $S^2$. Thus we can always choose the coordinates so $\gamma= \td \theta^2+\sin^2\theta \td\phi^2$.  The asymptotic form of the metric is thus
\[
\mathbf{g}= \td r^2+ r^2( \td \theta^2+\sin^2\theta \td\phi^2)+ (\td \tau+\omega_\theta(\theta, \phi) \td \theta+ \omega_\phi(\theta, \phi)\td \phi )^2 +O(r^{-1}) \;
\]
Recall that here $O(r^{-1})$ refers to the norm with respect to the asymptotic metric $\td r^2+r^2 \gamma +\eta^2$.  However, since $\omega_\theta, \omega_\phi$ are $O(1)$ the 1-form  $\omega= O(r^{-1})$ and hence the claim follows. 
\end{proof}

While the above proposition is sufficient for establishing the uniqueness and existence theorem, it is interesting to develop a more detailed asymptotic expansion.  We first deduce the asymptotic form of the Weyl-Papapetrou coordinates. Indeed, Proposition \ref{prop:ALFend} implies that with respect to the Killing fields $\partial_\tau, \partial_\phi$ we have
\begin{equation}
\rho = r \sin\theta + O(1), \qquad z = r \cos\theta + O(\log r)
\end{equation}
as $r \to \infty$.
Then, by writing the metric in Weyl-Papapetrou coordinates we can improve the asymptotic expansion of the metric as follows. To this end, it is useful to introduce polar coordinates in the $(\rho, z)$ plane by $\rho = R \sin \Theta$ and $z= R \cos\Theta$.  It follows that $R= r +O(\log r)$ and hence the asympototic end $r\to \infty$ corresponds to $R\to \infty$.   

\begin{proposition}\label{prop:ALFweyl}
    Let $(M, \mathbf{g})$ be a toric ALF instanton. Then in Weyl-Papapetrou polar coordinates the metric can be written as
    \begin{equation}\label{eq:Weylpolar}
    \mathbf{g}=  V ( \td \tau+ \omega \td \phi)^2 + V^{-1} R^2 \sin^2\Theta \td \phi^2 + e^{2\nu} (\td R^2+ R^2 \td \Theta^2)
    \end{equation}
    where 
    \begin{equation}
V = 1- \frac{2m}{R}+ O_k(R^{-2}), \quad \omega = N \cos\Theta+ \frac{2j \sin^2\Theta}{R} + O_k(R^{-2}\log^2 R)  , \quad e^{2\nu} = 1 + \frac{2m}{R} + O_k(R^{-2}),
    \end{equation}
where $N, m$ and $j$ are constants.
\end{proposition}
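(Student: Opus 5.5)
Since $\xi=\partial_\tau$ is Killing with $|\xi|^2\to 1$ at infinity, I will use it as the distinguished Killing field and parametrise the Gram matrix in the basis $(\partial_\tau,\partial_\phi)$ as $g_{\tau\tau}=V$, $g_{\tau\phi}=V\omega$, $g_{\phi\phi}=V\omega^2+\rho^2V^{-1}$. This uses $\det g=\rho^2$, which is the definition of $\rho$, and gives the form \eqref{eq:Weylpolar} once we pass to polar coordinates $\rho=R\sin\Theta$, $z=R\cos\Theta$. Proposition \ref{prop:ALFend} then gives the leading behaviour $V=1+O(R^{-1})$. It also gives $g_{\tau\phi}=O(1)$ and $e^{2\nu}=1+O(R^{-1})$, using $\rho=r\sin\theta+O(1)$ and $z=r\cos\theta+O(\log r)$. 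The task is therefore to upgrade these crude bounds to the stated expansions, and for that I will use the field equations \eqref{eq:geq}.

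With this parametrisation, \eqref{eq:geq} becomes the standard Ernst-type system on flat $\mathbb{R}^3$ with cylindrical coordinates $(\rho,z,\phi)$, where $\phi$ is a fictitious angle. Let $\psi$ be the twist potential, defined by $\td\psi=\rho^{-1}V^2\,\hat\star\td\omega$; it exists globally because the orbit space is simply connected. The equations then read
\[
\Delta\log V=-V^{-2}|\nabla\psi|^2,\qquad \nabla\cdot(V^{-2}\nabla\psi)=0,
\]
where $\Delta$ is the axisymmetric Laplacian on $\mathbb{R}^3$. The right-hand sides decay, so I will linearise at infinity. Write $\log V$ and $\psi$ as a harmonic part plus a correction, expand the harmonic parts in Legendre polynomials in $\cos\Theta$, and use the bounds from Proposition \ref{prop:ALFend}, namely boundedness and smoothness of $V$ up to the semi-infinite axes $\Theta=0,\pi$, to discard the growing and singular solutions.

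This yields $\log V=-2m/R+O(R^{-2})$. For the twist potential it yields $\psi=\psi_0+N/R+\dots$. At order $R^{-2}$ the harmonic $\cos\Theta/R^2$ term gives $j$. Quadratic sources such as $|\nabla\psi|^2\sim N^2R^{-4}$ feed into $\log V$ only at order $R^{-2}$, and cross terms $\nabla\log V\cdot\nabla\psi$ feed into $\psi$ at $O(R^{-3})$.

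Inverting $\td\psi$ to recover $\omega$ integrates $\rho\,\partial_R\psi$-type expressions in $\Theta$ and $R$. The term $N/R$ produces $\omega=N\cos\Theta$, and the dipole term produces $2j\sin^2\Theta/R$. The constant of integration in $\omega$ is fixed by the choice of $\phi$ within the torus basis: a shift $\tau\to\tau+c\phi$ changes $\omega$ by a constant. I expect the $\log^2R$ losses in the error term to come from iterating the inhomogeneous Poisson equations with sources that decay only like $R^{-4}$ times logs. Those logs are inherited from the $O(\log r)$ relation between $z$ and $r\cos\theta$, possibly through a resonant $R^{-3}\log R$ source. Finally, $\nu$ is obtained by integrating \eqref{eq:nuint}. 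Its leading term is the quadratic expression $\tfrac{\rho}{8}\mathrm{Tr}(J_\rho^2-J_z^2)$ combined with the $-1/(2\rho)$ term, which in this parametrisation reduces to $\nu$ being determined by $V$ and $\psi$ together with the $-\tfrac12\log V$ contribution. That gives $e^{2\nu}=V^{-1}(1+O(R^{-2}))=1+2m/R+O(R^{-2})$, where the integration constant is fixed by the normalisation $e^{2\nu}\to1$ from Proposition \ref{prop:ALFend}.

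The main obstacle is the second step: obtaining decay of the solution with derivatives (the $O_k$ statements) starting from only $O(R^{-1})$ information, and justifying the multipole expansion in the presence of the axis, where the equations are degenerate. I would handle this by noting that the fields are smooth axisymmetric functions on $\mathbb{R}^3$ minus a compact set, including across the axis, thanks to the regularity of $h,w$ in $\rho^2$ from \eqref{eq:gparam}. Standard weighted elliptic estimates for $\Delta$ on the exterior of a ball then bootstrap the decay, and the harmonic expansion with iterated remainders gives the stated orders.
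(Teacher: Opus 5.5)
Your route is essentially the paper's. You use the same $(V,\omega)$ parametrisation, the same twist potential and Ernst-type system, and the same crude decay from Proposition~\ref{prop:ALFend}. You then do iterated multipole expansions with remainders and finish with quadratures for $\omega$ and $\nu$. The one difference is that the paper quotes the lemmas of Beig and of Beig--Simon for the Poisson step you propose to redo with weighted estimates. Your treatment of $\nu$ via $e^{2\nu}=V^{-1}e^{2\gamma}$ with $\gamma=\mathrm{const}+O(R^{-2})$ is correct.

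There is, however, a concrete error in how you track the nonlinear terms, and taken literally it gives the wrong $R^{-1}$ term in $\omega$. The cross term does not enter $\psi$ at $O(R^{-3})$. With $\log V=-2m/R+\dots$ and $\psi=N/R+\dots$ the source is $2\nabla\log V\cdot\nabla\psi=-4mN R^{-4}+\dots$. Since $\Delta_3 R^{-2}=2R^{-4}$, this forces a term $-2mN/R^2$ in $\psi$, at the same order as the dipole carrying $j$. Taub--NUT shows this: up to sign and a constant, $\psi=N/(R+N)=N/R-N^2/R^2+\dots$ with $m=N/2$.

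This term is needed. Invert $\td\omega=-\rho V^{-2}\hat\star\td\psi$ with $V^{-2}=1+4m/R+\dots$. The monopole alone gives $\omega=N\cos\Theta\,(1+4m/R)+\dots$. It is exactly the $-2mN/R^2$ term that cancels the spurious $4mN\cos\Theta/R$, leaving $2j\sin^2\Theta/R$ as the only $R^{-1}$ term.

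For a related reason you should expand $\psi$ before $\log V$, as the paper does. A generic $O(R^{-4})$ source produces an $R^{-2}\log R$ term through the $\ell=1$ resonance. You only get the log-free remainder $V=1-2m/R+O_k(R^{-2})$ once you know the source $V^{-2}|\nabla\psi|^2$ equals $N^2R^{-4}+O(R^{-5}\log R)$. These resonances are where the logarithms in the remainders come from, not the $O(\log r)$ relation between $z$ and $r\cos\theta$.

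Finally, in Riemannian signature the first equation is $\Delta_3\log V=+V^{-2}|\nabla\psi|^2$. Your minus sign is the Lorentzian Ernst one; Taub--NUT, where $\psi=\pm(1-V)$ up to a constant and $\Delta_3\log V=|\nabla\log V|^2>0$, confirms the plus sign. This only flips the sign of the $N^2/(2R^2)$ term in $\log V$ and does not affect the statement.
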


\begin{proof} We can parameterise the Gram matrix in the basis $\partial_\tau, \partial_\phi$ of Proposition \ref{prop:ALFend}  by functions $V$ and $\omega$ as above.  Then 
the twist potential $Y$ with respect to $\partial_\tau$ satisfies~\cite{Kunduri:2021xiv} 
 \begin{equation}
 \td Y= - \frac{V^2}{\rho} \star_2 \td \omega  \label{Wdef}
 \end{equation}
The equation (\ref{eq:geq}) in this parameterisation reduces to
 \begin{align}
 \Delta_3 \log V &= V^{-2} \nabla Y \cdot \nabla Y  \label{Veq} \\
 \Delta_3 Y &= 2\nabla \log V \cdot \nabla Y  \label{Weq}
 \end{align}
 where $\nabla, \Delta_3$ are the $\mathbb{R}^3$ derivative and Laplacian  respectively, on an auxiliary $\mathbb{R}^3$ with cylindrical coordinates $(\rho, z, \varphi)$ and $V, W$ are axially symmetric.  
 
 Now comparing to Proposition \ref{prop:ALFend} and noting that $R= r+O(\log r)$ we deduce that as $R\to \infty$
 \begin{equation}
 V=1+ O_k(R^{-1}), \qquad \omega= O_k(1), \qquad \nu = O_k(R^{-1})
 \end{equation}
 where $f=O_k(R^{-1})$ if $\partial^i f=O(R^{-1-i})$ for $i\leq k$ in cartesian coordinates on the $\mathbb{R}^3$.  We now determine the explicit form of the leading terms in the asymptotic expansion,  using the results of Beig and Simon who determined the expansion for asymptotically flat stationary spacetimes~\cite{BeigI, BeigII}.
 
 Integrating for the twist potential we find  $Y= O_k(R^{-1})$.  Therefore, the r.h.s. of \eqref{Weq} is $O_{k-1}(R^{-4})$ so by \cite[Lemma B]{BeigI} there is a constant $N$ such that
 \begin{equation}
 Y= \frac{N}{R}+O_k(R^{-2}\log R)
 \end{equation}
 Then (\ref{Veq}) implies
 \begin{equation}
 \Delta_3 \left( \log V- \frac{N^2}{2 R^2} \right)= O_{k-1}(R^{-5}\log R)
 \end{equation}
 where we have used $\Delta_3(R^{-2})= 2 R^{-4}$. By \cite[Lemma A]{BeigII}, it follows that there exist constants $m, m_1$ such that
 \begin{equation}
 \log V= -\frac{2m}{R}+\frac{N^2}{2 R^2} +\frac{m_1 \cos\Theta}{R^2} +O_{k}(R^{-3}\log^2 R)
 \end{equation}
 and hence in particular
 \begin{equation}
 V= 1-\frac{2m}{R}+O_k(R^{-2}) \; .
 \end{equation}
 Then (\ref{Weq}) implies
 \begin{equation}
 \Delta_3 \left( Y+\frac{2mN}{R^2} \right) =O_{k-1}(R^{-5}\log R)
 \end{equation}
 and hence by \cite[Lemma A]{BeigII} again there exist a constant $j$ such that
 \begin{equation}
 Y=\frac{N}{R}-\frac{2m N}{R^2}-\frac{2j \cos\Theta}{R^2}+O_k(R^{-3}\log^2 R)
 \end{equation}
 Integrating (\ref{Wdef}) then implies
 \begin{equation}
 \omega= N \cos \Theta+ \frac{2 j \sin^2\theta}{R}+O_k(R^{-2}\log^2 R)  \; .
 \end{equation}
Finally, \eqref{eq:nuint} can be rewritten in terms of the $(R,\Theta)$ coordinates as
\begin{equation}
    \partial_R \nu = -\frac{m}{R^2} + O_{k-1}(R^{-3}), \qquad \partial_\Theta \nu = O_{k-1}(R^{-2}).
\end{equation} Integrating for the conformal factor leads to the claimed result. \end{proof}

\begin{example}[Taub-NUT]\label{ex:TN}
The self-dual Taub-NUT metric in Gibbons-Hawking form is given by 
\begin{equation}
    \mathbf{g}_{\text{TN}} = H^{-1} (\td \tau+ N \cos\theta \td \phi)^2+ H ( \td r^2+ r^2 \td \theta^2+ r^2 \sin^2 \theta \td 
    \phi^2), 
\qquad H= 1+\frac{N}{r}
\end{equation}
where $N$ is a constant.  If $N>0$ this metric extends smoothly  to $r=0$ which is a fixed point of the torus symmetry generated by $(\partial_\tau, \partial_\phi)$ and gives a smooth metric on $M= \mathbb{R}^4$. This metric is already expressed in Weyl polar coordinates and hence comparing to Proposition \ref{prop:ALFweyl} we  immediately deduce that the metric is ALF with $m= \tfrac{1}{2} N$ and $j=0$.
\end{example}

\begin{example}[Taub-Bolt] The Taub-Bolt instanton is on $M=\mathbb{R}^2 \times S^2$ and is ALF with $S^3$ topology at infinity. The metric is given by
 \begin{equation}\begin{aligned}
 \mathbf{g}_{\text TB} &= U(r) \left(\td \tau+N \cos\theta \td \phi\right)^2 +  \frac{\td r^2}{U(r)} + \left(r^2 - \frac{N^2}{4} \right) (\td\theta^2 + \sin^2\theta \td \phi^2) \\
 U(r) & = \frac{(r - N)(4r  -N)}{4r^2 -N^2}
 \end{aligned}
 \end{equation} where $N>0$, $r \in (N, \infty), \theta \in (0,\pi)$. The torus symmetry generated by $(\partial_\tau, \partial_\phi)$ is such that $\tau = N(2 \phi^1 +  \phi^2), \phi = \phi^2$ where $(\phi^1, \phi^2)$ are independently $2\pi$ periodic coordinates, which ensures the metric extends smoothly to the axes $\theta=0,\pi$ and a bolt at $r=N$. The rod structure is given by rods $(r>N, \theta=\pi)$, $(r=N, 0<\theta<\pi)$, $(r>N, \theta=0)$ with rod vectors $(0,1)$, $(1,0)$, $(1,-1)$ respectively (in the basis $(\partial_{\phi^1}, 
 \partial_{\phi^2})$). Weyl-Papapetrou coordinates with respect to $(\partial_\tau, \partial_\phi)$ are
 \begin{equation}
     \rho =  \frac{1}{2} \sqrt{(r - N)(4r - N)}\sin\theta, \qquad z = \left( r- \frac{5N}{8}\right) \cos\theta 
 \end{equation} 
 and converting to Weyl polar coordinates one finds in particular  $r = R + O(1)$. Comparing to Proposition \ref{prop:ALFweyl}, a short computation confirms the metric is ALF with $m= \tfrac{5}{8}N$ and $j=0$.
 \end{example}
    
   If $N=0$ in the previous  proposition the metric is AF. Indeed, if we let $\phi^i$ be $2\pi$-periodic torus coordinates, then the corresponding Killing fields $\partial_{\phi^i}$ are related to $\partial_\tau, \partial_\phi$ by a $GL(2, \mathbb{R})$ transformation of the form $\partial_{\phi^2}= \hat\beta (\partial_\tau+ \Omega \partial_\phi)$ and $\partial_{\phi^1}= \partial_\phi$ for some constants $\hat{\beta}>0$ and $\Omega$. In terms of the coordinates this reads $\tau= \hat\beta \phi^2$ and $\phi = \phi^1+ \hat\beta \Omega \phi^2$, which implies the identifications $(\tau, \phi)\sim (\tau+ \beta, \phi+ \beta \Omega)$ and $(\tau, \phi)\sim (\tau, \phi+2\pi)$.  Therefore, from Proposition \ref{prop:ALFweyl} we deduce that  the definition of AF used in~\cite{Kunduri:2021xiv} follows from the more general definition of ALF used in this paper. Furthermore, we recover the asymptotic expansion obtained there for the AF case, where $m$ and $j$ were identified as Riemannian analogues of the mass and angular momentum.  
   
      Similarly, Proposition \ref{prop:ALFweyl} shows that in the ALF case we have  asymptotic invariants $N, m, j$ which are natural to identify as Riemannian analogues of the `NUT charge', the mass and the angular momentum.  Minerbe~\cite{Minerbe} has defined a mass for ALF manifolds (not necessarily toric or Ricci flat), although he assumed that the circle at infinity has bounded length (so in the AF case $\Omega=0$). In contrast we have obtained a mass for toric ALF instantons where we do not assume the circle at infinity is bounded.  It would be interesting to define a more general mass for ALF manifolds that unifies these two definitions.

\subsection{Toric ALE instantons}

We now introduce coordinates adapted to the ALE case. This is simpler since the  cross-section at infinity $(S, \gamma)$ is given by $S=L(p,q)$ with $\gamma$ maximally symmetric with unit curvature, so it must be locally isometric to the unit round metric on $S^3$. Therefore, the metric at infinity $\td r^2+ r^2 \gamma$ is a flat cone. There are of course many coordinate systems adapted to the toric symmetry one could use, but perhaps one of the simplest are the Hopf coordinates which are introduced as follows. A lens space $L(p,q)$ may be defined as a quotient of $S^3=\{ (z_1, z_2) \in \mathbb{C}^2 \; | \; |z_1|^2+ |z_2|^2=1\}$ under the $\mathbb{Z}_p$-action $(z_1, z_2)\sim (e^{2\pi i/p} z_1, e^{2\pi i q/p})$ where $p\in \mathbb{N}$ and $q$ is defined mod $p$ so we take $q=0, 1, \dots, p-1$. Introducing coordinates $z_i= r_i e^{i \bar{\phi}_i}$,  $r_1= \bar r \sin \bar\theta$, $r_2= \bar r \cos\bar\theta$ with $0\leq \bar\theta \leq \pi/2$,  the flat Euclidean metric $|\td z_1|^2+ \td z_2|^2$ on $\mathbb{C}^2$ induces a flat cone metric with cross-section $S=L(p,q)$, 
\begin{equation}
{\mathbf{g}_0}= \td \bar r^2+ \bar r^2 ( \td \bar\theta^2+ \sin^2\bar\theta \td \bar{\phi}_1^2+ \cos^2\bar\theta \td \bar{\phi}_2^2 )   \label{eq:ALEflat}
\end{equation}
where  
\begin{equation}\label{eq:lens}
       (\bar{\phi}_1,\bar{\phi}_2) \sim \left(\bar{\phi}_1+ \frac{2\pi}{p}, \bar{\phi}_2+ \frac{2\pi q}{p} \right)  \qquad (\bar{\phi}_1,\bar{\phi}_2) \sim (\bar{\phi}_1, \bar{\phi}_2+2\pi) \;.
\end{equation} This gives coordinates $(\bar\theta, \bar{\phi}_1, \bar{\phi}_2)$ on $S$ with the above identifications and $\gamma$ is easily read off.   Therefore, in these coordinates the metric for an ALE instanton near infinity  takes the form
\begin{equation}\label{eq:ALEbar}
    \mathbf{g}= \mathbf{g}_0+ O(\bar{r}^{-\tau})
\end{equation}
as $\bar r \to\infty$.
It will be convenient to also use Euler angles $(\theta, \psi, \phi)$ on $S$, defined by $\bar\theta= \theta/2$ and 
\begin{equation}
\label{eq:euler}
\bar{\phi}^1= \frac{\phi-\psi}{2}, \qquad \bar{\phi}^2= \frac{\phi+\psi}{2}
\end{equation}
in terms of which the asymptotic metric is 
\begin{equation}
{\mathbf{g}_0}= \td \bar r^2+ \tfrac{1}{4} \bar r^2 ( \td \theta^2+ \sin^2\theta \td \phi^2+ (\td \psi+ \cos\theta \td \phi)^2)   \label{eq:ALEflat2}
\end{equation}
We can now deduce the asymptotic form of the metric in Weyl-Papapetrou coordinates. With respect to the basis of Killing fields $\partial_\psi, \partial_\phi$  we find 
\[
\rho =\tfrac{1}{4} \bar r^2 \sin\theta  +O(\bar r^{2-\tau}), \qquad z= \tfrac{1}{4} \bar r^2 \cos\theta  +O(\bar r^{2-\tau})
\]
and therefore the Weyl polar coordinate $R=\sqrt{\rho+ z^2}$ is $R= \tfrac{1}{4} \bar r^2  + O(\bar r^{2-\tau})$. Therefore the asymptotic end corresponds to $R\to \infty$ and inverting we deduce that $\tfrac{1}{4} \bar r^2= R+ O(R^{1-\tau/2})$.

It is useful to write the ALE and ALF $N\neq 0$ in Weyl-Papapetrou coordinates in a unified form.

\begin{proposition}\label{prop:ALEFWeyl}
    Let $(M, \mathbf{g})$ be a toric ALE or ALF $N\neq 0$ instanton. Then in Weyl-Papapetrou polar coordinates the metric can be written as
    \begin{equation}\label{eq:Weylpolar2}
    \mathbf{g}=  W^{-1} ( \td \psi+ \Omega \td \phi)^2 + W R^2 \sin^2\Theta \td \phi^2 + e^{2\nu} (\td R^2+ R^2 \td \Theta^2)
    \end{equation}
    where 
    \begin{equation}
    W= \begin{cases}  
     \frac{1}{R}+ O_k(R^{-1-\tau/2}) \\ \frac{1}{N^2} + O_k(R^{-1})
    \end{cases} 
    \quad
    \Omega =\begin{cases}
        \cos\Theta + O_k(R^{-\tau/2}) \\
        \cos\Theta + O_k(R^{-1})
    \end{cases}
    \quad e^{2\nu} =\begin{cases} \frac{1}{R}+O_k(R^{-1-\tau/2})\\  \frac{1}{N^2} + O_k(R^{-1})
    \end{cases}
    \quad \begin{array}{c}
        \text{ALE} \\ \text{ALF}
    \end{array}
\end{equation}
In both cases, the angles $(\psi, \phi)$ are subject to the identifications in  \eqref{eq:lens} and \eqref{eq:euler}, so the constant $R$ surfaces at infinity have lens space $L(p,q)$ topology.
\end{proposition}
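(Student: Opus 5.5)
The plan is to treat the two cases separately, in each case starting from an adapted basis of Killing fields $(\partial_\psi,\partial_\phi)$ in which the Gram matrix is parameterised as in \eqref{eq:Weylpolar2}. The functions $W$ and $\Omega$ are then read off from the Gram matrix: $W^{-1}=|\partial_\psi|^2$ and $\Omega=g_{\psi\phi}/g_{\psi\psi}$. The relation $\det g = \rho^2$ automatically gives the $W R^2\sin^2\Theta\,\td\phi^2$ term, since $\rho=R\sin\Theta$. So only the asymptotics of $W$, $\Omega$ and $e^{2\nu}$ need to be established.

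\emph{ALE case.} Start from \eqref{eq:ALEbar} written in Euler angles via \eqref{eq:ALEflat2}. Then $g_{\psi\psi}=\tfrac14\bar r^2(1+O(\bar r^{-\tau}))$ and $g_{\psi\phi}=\tfrac14 \bar r^2(\cos\theta+O(\bar r^{-\tau}))$. Using the relations already derived, $\tfrac14\bar r^2 = R+O(R^{1-\tau/2})$ and $\Theta=\theta+O(\bar r^{-\tau})$, this gives $W^{-1}=R(1+O(R^{-\tau/2}))$ and $\Omega=\cos\Theta+O(R^{-\tau/2})$. To obtain the conformal factor, compare the orbit-space metric: $\td\bar r^2+\tfrac14\bar r^2\td\theta^2$ becomes $R^{-1}(\td R^2+R^2\td\Theta^2)$ under $R=\tfrac14\bar r^2$. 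This follows from the elementary identity $\td\bar r^2=\td R^2/R$ together with $\tfrac14\bar r^2\td\theta^2=R\,\td\Theta^2$. Hence $e^{2\nu}=R^{-1}(1+O(R^{-\tau/2}))$. The derivative ($O_k$) statements follow because the corrections are controlled with derivatives by the ALE decay $|\nabla^k h|=O(r^{-\tau-k})$. Alternatively, one can recover them by integrating \eqref{eq:nuint} once the leading terms are fixed.

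\emph{ALF case with $N\neq0$.} Start from Proposition \ref{prop:ALFweyl}, which gives the metric with $V=1+O(R^{-1})$, $\omega=N\cos\Theta+O(R^{-1})$ and $e^{2\nu}=1+O(R^{-1})$ in the basis $(\partial_\tau,\partial_\phi)$. Rescale $\psi=\tau/N$. Then $V(\td\tau+\omega\td\phi)^2=N^2V(\td\psi+N^{-1}\omega\,\td\phi)^2$, so $W^{-1}=N^2V$ and $\Omega=\omega/N=\cos\Theta+O(R^{-1})$. The Weyl coordinates then change by a constant factor, because $\rho$ is multiplied by $|N|$ under the basis change. To restore the normalisation $\rho=\sqrt{\det g}$, rescale $(\rho,z)\to(\rho,z)/|N|$, or equivalently $R\to R/|N|$. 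This yields $W=N^{-2}+O(R^{-1})$ after absorbing constants, and $e^{2\nu}$ picks up the factor $N^{-2}$ from $\td R^2+R^2\td\Theta^2$. One must check carefully that the powers of $N$ combine to give exactly $1/N^2$ for both $W$ and $e^{2\nu}$. I expect this to be consistent with Taub-NUT (Example \ref{ex:TN}), which can serve as a sanity check.

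\emph{Identifications.} This is the main obstacle. In the ALE case, \eqref{eq:lens} and \eqref{eq:euler} hold by construction. In the ALF case, one must show that the $2\pi$-periodic lattice of the torus, expressed in $(\psi,\phi)$, is exactly that of a lens space with the Euler-angle normalisation. The argument runs through the rod structure at infinity. The two semi-infinite rods $\Theta=0,\pi$ have rod vectors that are the kernels of the Gram matrix there. With $\Omega\to\pm1$, these kernels are $\partial_\phi\mp\partial_\psi$, which are the $2\pi$-normalised generators $\partial_{\bar\phi_1},\partial_{\bar\phi_2}$ up to scale. Smoothness (the absence of conical singularities at infinity), which ALF asymptotics force to leading order, then fixes their normalisation. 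The lattice they generate together with the full torus lattice is a finite-index extension determined by $\det(v_1,v_{n+1})=p$. This reproduces the $\mathbb{Z}_p$ quotient \eqref{eq:lens}, and hence $L(p,q)$ cross-sections.
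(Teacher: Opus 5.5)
Your proposal is correct, and for $W$, $\Omega$ and $e^{2\nu}$ it follows the paper's route exactly. In the ALE case the paper changes to Weyl polar coordinates in \eqref{eq:ALEbar} and \eqref{eq:ALEflat2}. In the ALF case it sets $\tau=N\psi$ in Proposition~\ref{prop:ALFweyl} and recomputes $R$ for the basis $(\partial_\psi,\partial_\phi)$. The $N$-bookkeeping you hedge on closes exactly: $W=N^{-2}V^{-1}$, and $e^{2\nu}\mapsto N^{-2}e^{2\nu}$ under $(\rho,z)\mapsto|N|(\rho,z)$. Also, $WR^2\sin^2\Theta$ in the new variables equals $V^{-1}R^2\sin^2\Theta$ in the old ones.

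Where you genuinely differ is in the identifications. The paper treats them as immediate from the toric ALF definition, which fixes the boundary to be $L(p,q)$. You instead derive them from the rod structure at infinity. This has the merit of actually showing that the normalisation $\psi=\tau/N$, fixed by $V\to1$, is the Euler angle with the standard period.

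To make your argument exact rather than leading-order, you need three further facts:
\begin{enumerate}
\item The rod vector is constant along a rod. Hence $\Omega=\pm1$ exactly on the rods $\Theta=0,\pi$ respectively, and $\Omega\mp1=O(\rho^2)$ nearby.
\item The quantity $c$ in \eqref{eq:nunearI} is a constant on each rod. For $v=\partial_\phi\mp\partial_\psi$ one finds $c^2=e^{2\nu}/W|_{\rho=0}$, which tends to $1$ as $R\to\infty$. So $c=1$, and these are exactly the $2\pi$-normalised rod vectors.
\item Both rod vectors are primitive. The extra identifications therefore form a finite subgroup of $(\mathbb{R}/2\pi\mathbb{Z})^2$ meeting each coordinate circle of $(\bar\phi^1,\bar\phi^2)$ trivially. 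Such a subgroup projects injectively onto each factor, so it is cyclic of order $p$ and generated by $(2\pi/p,2\pi q/p)$ with $\gcd(p,q)=1$. This is precisely \eqref{eq:lens}.
\end{enumerate}
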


\begin{proof}
First consider the ALE case. Relative to the basis $\partial_\psi, \partial_\phi$ we can parameterise the Gram matrix as in \eqref{eq:Weylpolar2} where we have defined Weyl-Papapetrou polar coordinates by $\rho = R\sin \Theta$ and $z= R \cos\Theta$. Then changing to Weyl polar coordinates in \eqref{eq:ALEbar} and \eqref{eq:ALEflat2} we deduce the result.

Now consider the ALF $N\neq 0$ case. Setting $\tau = N \psi$ in Proposition \ref{prop:ALFweyl} and defining $R$ with respect to Weyl-Papapetrou coordinates adapted to $\partial_\psi, \partial_\phi$ immediately implies the result. 
\end{proof}

\begin{remark}
Note that in the coordinate system of the previous proposition, the ALE case arises as the $N\to \infty$ limit of the ALF case.
\end{remark}

 It appears that the method used for for developing the asymptotic expansion for the ALF case, to obtain Proposition \eqref{prop:ALFweyl}, does not work in the ALE case. To see this, observe that ALE instantons can be written as in Proposition \ref{prop:ALFweyl} where  $\tau$ is $\psi$,  $V= R+O(1)$ and $\omega=O(1)$ so $\nabla Y=O(1)$, so the r.h.s. of \eqref{Weq} is $O(R^{-1})$ which is not fast enough for the results of Beig and Simon to apply. In any case, in what follows we will not need the detailed form of the asymptotic expansions.

\subsection{Asymptotic model}

For later purposes, it will be convenient to write an exact solution that captures the asymptotic metric in the ALE and generic $N
\neq 0$ ALF  case in a unified form. This is easily achieved by  the following Gibbons-Hawking metric 
\begin{equation}
\bar{\mathbf{g}}= H^{-1}( \td \psi+ \cos\theta \td \phi)^2 + H (\td r^2+ r^2 \td\theta^2+ r^2 \sin^2\theta \td \phi^2), \qquad H = c+ \frac{\varepsilon}{r}
\end{equation}
where $c\geq 0$ and $\varepsilon=\pm 1$.  For $c=1/N^2>0$ and $\varepsilon= \sgn(N)$ this is locally isometric to the Euclidean Taub-NUT metric in Example \ref{ex:TN} (possibly with $N<0$) as can be seen by rescaling $\tau= N \psi$ and $r$ (also it is the ALF asymptotic metric in Proposition \ref{prop:ALEFWeyl} with $W= H$ and $\Omega= \cos\theta$ and $R=r$). For $c=0$ and $\varepsilon=1$ this is a flat metric on $\mathbb{R}^4$ and corresponds to the ALE asymptotic metric \eqref{eq:ALEflat2}, as can be seen by changing coordinate $r= \tfrac{1}{4} \bar{r}^2$  (also it is the ALE asymptotic metric in Proposition \ref{prop:ALEFWeyl} with $W=1/R$ and $\Omega = \cos\theta$). Note that smoothness of the metric requires $H>0$, which is always the case for $\varepsilon=1$, whereas for $\varepsilon=-1$  requires $r>N^2$.  In both cases surfaces of constant $r$ are locally isometric to $S^3$.

We now extract the rod structure of this asymptotic model. In the $(\psi, \phi)$ coordinates the Gram matrix has determinant $\det \bar{g}= r^2 \sin^2\theta$. Hence, for $\varepsilon=1$ the axis divides into rods defined by $\theta=0, r>0$ where $\bar{v}_1:= \partial_\phi-\partial_\psi=0$ and $\theta=\pi, r>0$ where $\bar{v}_2:= \partial_\psi+\partial_\phi=0$ separated by a fixed point of the torus symmetry at $r=0$. If $\varepsilon=-1$ one has the same semi-infinite rods and rod vectors for $r>N^2$, although now the domain does not include the fixed point.  In any case, this will not matter as we will only use this metric for our asymptotic model at sufficiently large $r$. Therefore, for simplicity for the rest of this section we fix $\varepsilon=1$, although the same statements hold for $\varepsilon=-1$ provided $r$ is sufficiently large.

We can introduce angles adapted to the rod structure. These are given by the Hopf coordinate \eqref{eq:euler}, in terms of which 
 the rod vectors defined above are simply $\bar{v}_i= \partial_{\bar{\phi}^i}$ for $i=1,2$.  The metric in these coordinates  is
\begin{equation}\label{eq:barg}
\bar{\mathbf{g}}= H^{-1}(  (1+\cos\theta) \td \bar{\phi}^2 - (1-\cos\theta) \td \bar{\phi}^1)^2+ H r^2 \sin^2\theta (\td \bar{\phi}^1 + \td \bar{\phi}^2)^2 + H (\td r^2+ r^2 \td\theta^2)  \; .
\end{equation}
Note that for $c=0, \varepsilon=1$ this metric reduces to \eqref{eq:ALEflat} by the change of coordinates $(\bar r , \bar\theta)=( 2 \sqrt{r}, \tfrac{1}{2}\theta)$, as already observed.
 In any case, it is easy to see that \eqref{eq:barg} extends smoothly at $\theta=0, r>0$ and $\theta=\pi, r>0$ if $\bar{\phi}^i$ are both independently $2\pi$ periodic.  With this choice constant $r>0$ surfaces have $S^3$ topology and the metric extends smoothly to $r=0$ where there is a fixed point of the torus symmetry. It is useful to write \eqref{eq:barg} in Weyl-Papapetrou coordinates:  defining $\rho^2= \det \bar{g}$ with respect to the $\bar{\phi}^i$ coordinates we find
\[
\rho = 2r \sin \theta, \qquad z= 2r \cos\theta
\]
and the metric becomes
\[
\bar{\mathbf{g} }= \frac{( \mu^+\td \bar{\phi}^1- \mu^- \td \bar{\phi}^2)^2}{4 r^2 H}+ \frac{H \rho^2}{4} (\td \bar{\phi}^1 + \td \bar{\phi}^2)^2 + \frac{H}{4} (\td \rho^2+ \td z^2)
\]
where $\mu^\pm = \sqrt{\rho^2+z^2} \mp z$.  Now the rods are $\rho=0, z<0$ where $\mu^-=0$ so $\partial_{\bar{\phi}^2}=0$,  and $\rho=0, z>0$ where $\mu^+=0$ so $\partial_{\bar{\phi}^1}=0$.

In order to describe the most general topology at infinity we can now take a lens space quotient of the $S^3$ constant $r$ surfaces. As the metric is toric, this is simply effected by taking the quotient \eqref{eq:lens}. 
With these identifications constant $r$ surfaces are lens spaces $L(p,q)$ and $r=0$ now corresponds to an orbifold singularity. This singularity does not matter though,  as we will only use this as an asymptotic model geometry for sufficiently large $r$.  We can introduce coordinates that are independently $2\pi$-periodic by 
\begin{equation}
    \phi^1= p \bar{\phi}^1, \qquad  \phi^2 = \bar{\phi}^2- q \bar{\phi}^1 
\end{equation}
so the identifications on $\bar{\phi}^i$ are equivalent to 
\[
(\phi^1, \phi^2)\sim (\phi^1+2\pi , \phi^2), \qquad (\phi^1, \phi^2)\sim (\phi^1, \phi^2+2\pi)\;. 
\]
We can write the Gram matrix in the $\phi^i$ coordinates as
\begin{equation}
    g= L \bar{g} L^T, \qquad \bar{g} = \begin{pmatrix}
        \frac{(\mu^+)^2}{4r^2 H}+ \frac{H \rho^2}{4} & \frac{H\rho^2}{4} \left( 1 - \frac{1}{ r^2 H^2} \right) \\ \frac{H \rho^2}{4} \left( 1 - \frac{1}{r^2 H^2 } \right)&  \frac{(\mu^-)^2}{4r^2 H}+ \frac{H \rho^2}{4}
    \end{pmatrix} \; , \qquad L= \begin{pmatrix}
        \frac{1}{p} & \frac{q}{p} \\ 0 & 1 
    \end{pmatrix}
    \label{eq:modelALF}
\end{equation}
where we have used $\partial_{\phi^i}= L_{ij} \partial_{\bar{\phi}^j}$. Note that $\det g= p^{-2} \rho^2$  and we can verify
\[
L^T \begin{pmatrix}
    0 \\ 1
\end{pmatrix} = \begin{pmatrix} 0 \\1 \end{pmatrix}, \qquad L^T \begin{pmatrix}
    p \\ -q
\end{pmatrix} = \begin{pmatrix}
    1 \\0
\end{pmatrix}
\]
which implies 
\begin{equation}
    g|_{\rho=0, z<0}\begin{pmatrix}
    0 \\ 1
\end{pmatrix} =0, \qquad g|_{\rho=0, z>0}\begin{pmatrix}
    p \\ -q
\end{pmatrix}=0
\end{equation}
so the rod vectors on the two semi-infinite rods are indeed those for a general lens space $L(p,q)$.

\section{Harmonic map formulation}
\label{sec3}

\subsection{Uniqueness and existence}
We may reformulate toric gravitational instantons as harmonic maps, following~\cite{Kunduri:2021xiv}.  Define $\Phi = \rho^{-1} g$ so that $\det \Phi=1$.   Then, by harmonicity of $\rho$,
\begin{equation}
     \td (\rho \hat\star \Phi^{-1} \td \Phi)=0  \; .
\end{equation}
In terms of Weyl-Papapetrou coordinates this can be written as
\begin{equation}
    \nabla \cdot (\Phi^{-1} \nabla \Phi)=0
\end{equation}
where $\nabla$ is the gradient operator on $\mathbb{R}^3$ with Euclidean metric $\delta= \td \rho^2+ \rho^2 \td \varphi^2+ \td z^2$ and $\Phi$ is an axisymmetric function on $\mathbb{R}^3 \backslash \Gamma$ where $\Gamma$ is the $z$-axis (note $\varphi$ here is an auxiliary coordinate and axisymmetry of $\Phi$ means $\partial_\varphi \Phi=0$). Thus $\Phi : \mathbb{R}^3\backslash \Gamma \to  N$ is a harmonic map, where $N= SL(2, \mathbb{R})/SO(2)$ is a symmetric space.  The tension of such a map is defined by 
\[
\tau(\Phi)= \Phi \nabla \cdot (\Phi^{-1} \nabla \Phi)
\]
and hence it is harmonic iff the tension vanishes. Its norm squared with respect to the target space metric is
\begin{equation}
    | \tau(\Phi)|^2 = \text{Tr}( \Phi^{-1} \tau \Phi^{-1} \tau) = \text{Tr}[(\nabla \cdot (\Phi^{-1} \nabla \Phi))^2 ]   \label{eq:tensionsq}
\end{equation}
Therefore, we wish to address uniqueness and existence of such harmonic maps with a prescribed (singular) boundary conditions on the axis $\Gamma$.  Fortunately, there is a well developed theory for harmonic maps with target spaces that include $N$, which we may exploit.

First, we establish the uniqueness part of Theorem \ref{th:exist}. This is much simpler and is essentially the same argument used to prove the black hole uniqueness theorem, see e.g.~\cite{Hollands:2008fm}. 

\begin{theorem}
    There is at most one toric ALE and toric ALF instanton for any given admissible rod structure and for given $N\neq 0$ in the ALF case.
\end{theorem}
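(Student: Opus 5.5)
Suppose $(M_1,\mathbf{g}_1)$ and $(M_2,\mathbf{g}_2)$ are two toric ALE (or toric ALF with the same $N\neq 0$) instantons with the same admissible rod structure. Write $\Phi_1,\Phi_2$ for the associated harmonic maps $\mathbb{R}^3\backslash\Gamma\to SL(2,\mathbb{R})/SO(2)$, built from Gram matrices in a common $2\pi$-periodic basis. By the Proposition on global Weyl--Papapetrou coordinates, both maps live on the same domain with the same rod points $z_i$. I would use the standard black hole uniqueness argument (Mazur/Bunting type, as in~\cite{Hollands:2008fm} and the AF case~\cite{Kunduri:2021xiv}). The target is nonpositively curved, so the distance function $d(\Phi_1,\Phi_2)$ between two harmonic maps is subharmonic. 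It is convenient to use the explicit quantity
\[
\sigma := \text{Tr}(\Phi_1^{-1}\Phi_2) - 2 \geq 0 ,
\]
which equals $2(\cosh d-1)$ and satisfies $\Delta_3 \sigma \geq 0$ on $\mathbb{R}^3\backslash\Gamma$ by the Mazur identity. The goal is to show that $\sigma$ is bounded, extends across $\Gamma$ as a weakly subharmonic function, and tends to $0$ at infinity. The maximum principle then gives $\sigma\equiv 0$, so $\Phi_1=\Phi_2$, hence $g_1=g_2$. Integrating \eqref{eq:nuint} gives $\nu_1=\nu_2$ up to a constant, which is fixed by the asymptotics (the leading terms of $e^{2\nu}$ in Proposition \ref{prop:ALEFWeyl}). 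The two metrics therefore agree.

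\textbf{Step 1 (near the axis).} On each rod $I_i$, use the adapted basis \eqref{eq:gparam} with the same rod vector $v_i$ for both solutions. Then $\Phi_a$ has entries of order $\rho^{-1}h_a$, $\rho h_a^{-1}w_a$, $\rho h_a^{-1}$. Computing $\text{Tr}(\Phi_1^{-1}\Phi_2)$ shows that the $\rho^{\pm1}$ singular terms cancel, leaving $h_1/h_2+h_2/h_1+O(\rho^2)$. This is bounded on compact subsets of the interior of each rod. The trace is basis independent, so this computation is legitimate rod by rod.

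\textbf{Step 2 (near the fixed points $z_i$).} Admissibility lets us choose a basis adapted to both adjacent rod vectors at once, and both solutions behave like the model flat corner. Hence $h_1/h_2$ stays bounded in a neighbourhood, and $\sigma$ is bounded there. A bounded function that is subharmonic off a line in $\mathbb{R}^3$ (a set of zero capacity) is subharmonic on all of $\mathbb{R}^3$.

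\textbf{Step 3 (infinity), the main obstacle.} Here the ALE/ALF structure enters, and we must show $\sigma\to 0$ as $R\to\infty$. Both solutions have the form of Proposition \ref{prop:ALEFWeyl} with respect to the same basis $\partial_\psi,\partial_\phi$. This holds because the rod vectors of the two semi-infinite rods (equivalently $p,q$) are the same, and in the ALF case $N$ is the same. In that basis $\Phi_a=\rho^{-1}g_a$ with
\[
g_a=\begin{pmatrix} W_a^{-1} & W_a^{-1}\Omega_a\\ W_a^{-1}\Omega_a & W_a^{-1}\Omega_a^2+W_aR^2\sin^2\Theta\end{pmatrix}.
\]
A direct computation gives
\[
\sigma = \frac{W_1}{W_2}+\frac{W_2}{W_1}-2+\frac{(\Omega_1-\Omega_2)^2}{W_1W_2\rho^2}.
\]
With $W_a = W_0(1+O(R^{-\alpha}))$ and $\Omega_1-\Omega_2=O(R^{-\alpha})$, where $\alpha=\tau/2$ (ALE) or $1$ (ALF), the first terms are $O(R^{-\alpha})$. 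The last term, however, is $O(R^{-2\alpha}/(W_0^2\rho^2))$, which can blow up near the axis at large $R$.

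To control it, I would use the fact that $\Omega_a$ equals $\pm1$ exactly on the two semi-infinite rods; this is the rod structure in the basis $(\psi,\phi)$. Hence $\Omega_1-\Omega_2$ vanishes on the axis to order $\rho^2$, so $(\Omega_1-\Omega_2)/\rho^2$ is controlled by $O(R^{-\alpha-2})$. For this I would invoke smoothness of $\Omega_a$ as functions of $(\rho^2,z)$, together with the $O_k$ derivative decay applied in Cartesian coordinates on $\mathbb{R}^3$. Then $\sigma=O(R^{-\alpha}) + O(R^{-2\alpha}R^{2}W_0^{-2}R^{-2})\to0$, using $W_0^{-2}=R^2$ in the ALE case. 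This makes the ALE bound the delicate one: if the naive estimate fails there, I would instead apply the maximum principle on large balls to $\sigma$ minus a harmonic majorant of the decaying error.

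With these three steps, $\sigma$ is a bounded subharmonic function on $\mathbb{R}^3$ tending to zero at infinity and nonnegative, so $\sigma\equiv0$, which completes the proof.
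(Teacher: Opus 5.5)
Your proposal is correct and is essentially the paper's argument: the Mazur identity for $\text{Tr}(\Phi_1^{-1}\Phi_2)-2$, the rod-adapted parametrisation \eqref{eq:gparam} to extend it across the axis, the Weyl-polar form of Proposition~\ref{prop:ALEFWeyl} (with $|N|$ fixed in the ALF case) to get decay at infinity, and the maximum principle. Your extra care only adds rigour: at the corners you use removability across a zero-capacity line, and near the semi-infinite rods at large $R$ you use $\Omega_1-\Omega_2=O(\rho^2)$, which the paper's stated $O(R^{-\tau})$ and $O(R^{-2})$ bounds tacitly need; note also that the bookkeeping actually gives $\rho^{-2}(\Omega_1-\Omega_2)^2/(W_1W_2)=O(R^{-2\alpha-2}W_0^{-2})$, i.e.\ $O(R^{-\tau})$ for ALE, so the ALE case is not delicate and no harmonic-majorant fallback is needed.
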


\begin{proof}
    Let $\Phi, \tilde\Phi: \mathbb{R}^3\backslash \Gamma \to  N$ be two harmonic maps and define the Mazur distance between these $\Psi:= \text{Tr} (\tilde \Phi \Phi^{-1}- I)$, which is nonnegative and vanishes iff $\Phi=\tilde\Phi$.  The Mazur identity then states that on $\mathbb{R}^3\backslash \Gamma $ (see e.g. ~\cite{Kunduri:2021xiv}),
    \begin{equation}
        \Delta_3 \Psi \geq 0  \; .
    \end{equation}
Now suppose the harmonic maps have the same rod structure. Then, for any rod $I_i$ we may choose a basis adapted to the corresponding rod vector and parameterise the Gram matrix as \eqref{eq:gparam}. A computation in this basis shows that 
\begin{equation}
\Psi= \frac{ (h-\tilde h)^2  +\rho^2 (w - 
\tilde w)^2}{ h \tilde h}
\end{equation}
where  $h, \tilde h >0$ and $w, \tilde w$ are all functions that are smooth in $\rho^2, z$ (the tildes refer to the parameterisation of $\tilde\Phi$).  Therefore, $\Psi$ extends to smooth function on the $z$-axis and hence defines a subharmonic function on all of $\mathbb{R}^3$.

On the other hand, since the two solutions also share the same asymptotics, we must have $\Psi \to 0$ at infinity.  To see this explicitly, consider the Gram matrix relative to the basis $(\partial_\psi, \partial_\phi)$ in Proposition \ref{prop:ALEFWeyl}. Then, 
\begin{equation}
    \Psi= \frac{ (W-\tilde W)^2 + \rho^{-2} (\Omega - \tilde \Omega)^2}{W \tilde W}  \; ,
\end{equation}
where the tilded quantities refer to the data corresponding to $\tilde \Phi$.
Hence, by  Proposition \ref{prop:ALEFWeyl}, we deduce that 
\begin{equation}
    \Psi= \begin{cases}
        O(R^{-\tau}) \\   \frac{ (N^2-\tilde N^2)^2}{N^2 \tilde N^2}  + O(R^{-1})
    \end{cases}
\end{equation}
as $R\to \infty$.
Hence for the ALE case $\Psi\to 0$ automatically. On the other hand, for the ALF case we must fix the asymptotic invariant $| N|=|\tilde N |$, in which case one has $\Psi= O(R^{-2})$ and hence also $\Psi \to 0$.  Therefore, by the maximum principle $\Psi$ vanishes identically and hence $\Phi= \tilde\Phi$.    
\end{proof}

To establish the existence part of Theorem \ref{th:exist} we can exploit  deeper results on harmonic maps  previously developed by Weinstein for stationary and axysmmetric spacetimes.

\begin{definition}
A map $\Phi_0: \mathbb{R}^3\backslash \Gamma \to N$ with $|\tau(\Phi_0)|$  bounded that decays at infinity 
sufficiently fast is called a {\it model map}. Two maps $\Phi_1, \Phi_2: \mathbb{R}^3\backslash \Gamma \to N$ are said to be {\it asymptotic} if $\text{dist}_{N}(\Phi_1, \Phi_2)$ is  bounded and decays to zero at infinity. 
\end{definition}

\begin{theorem}[Weinstein~\cite{Weinstein:2019zrh}] 
\label{th:weinstein} Given a  model map $\Phi_0: \mathbb{R}^3\backslash \Gamma \to N$ there exists unique harmonic map $\Phi : \mathbb{R}^3\backslash \Gamma \to N$ which is asymptotic to $\Phi_0$.
\end{theorem}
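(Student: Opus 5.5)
Since this is Weinstein's theorem, quoted from~\cite{Weinstein:2019zrh}, we only sketch how we would prove it. The strategy is to solve Dirichlet problems on an exhaustion of $\mathbb{R}^3\backslash \Gamma$ with boundary data $\Phi_0$, control the solutions uniformly by a barrier built from the tension of $\Phi_0$, and pass to the limit. Existence and uniqueness both rest on one fact: $N=SL(2,\mathbb{R})/SO(2)$ is complete, simply connected and nonpositively curved (it is the hyperbolic plane).

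\textbf{Exhaustion and Dirichlet problems.} We make the standing assumption that ``decays sufficiently fast'' means $|\tau(\Phi_0)|\le C(1+|x|)^{-2-\delta}$ for some $\delta>0$. Then the Newtonian potential
\[
u(x)=\frac{1}{4\pi}\int_{\mathbb{R}^3}\frac{|\tau(\Phi_0)|(y)}{|x-y|}\,\td^3y
\]
is bounded, satisfies $\Delta_3 u=-|\tau(\Phi_0)|$, and tends to zero at infinity. Exhaust $\mathbb{R}^3\backslash\Gamma$ by smooth bounded axisymmetric domains $\Omega_k$, for instance balls of radius $k$ with tubes of radius $1/k$ around $\Gamma$ removed. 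On each $\Omega_k$, Hamilton's theorem for targets of nonpositive curvature gives a harmonic map $\Phi_k$ with $\Phi_k=\Phi_0$ on $\partial\Omega_k$. Uniqueness of that Dirichlet problem makes $\Phi_k$ axisymmetric.

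\textbf{Uniform barrier and compactness.} Set $d_k=\text{dist}_N(\Phi_k,\Phi_0)$. Convexity of the distance function on a Hadamard manifold gives, in the weak sense,
\[
\Delta_3 d_k\ \ge\ -|\tau(\Phi_k)|-|\tau(\Phi_0)|=-|\tau(\Phi_0)|.
\]
Hence $d_k-u$ is subharmonic on $\Omega_k$. It is $\le 0$ on $\partial\Omega_k$, because $d_k=0$ there and $u\ge 0$. By the maximum principle, $d_k\le u$ on $\Omega_k$ for every $k$. So on any compact $K\subset\mathbb{R}^3\backslash\Gamma$ the images $\Phi_k(K)$ lie in a fixed bounded set of $N$. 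Interior gradient estimates for harmonic maps with bounded image into a nonpositively curved target (Bochner formula plus Moser iteration), followed by Schauder estimates, then give $C^{k,\alpha}_{\rm loc}$ bounds. A diagonal subsequence converges to a harmonic map $\Phi$ on $\mathbb{R}^3\backslash\Gamma$ with $\text{dist}_N(\Phi,\Phi_0)\le u$. Thus $\Phi$ is asymptotic to $\Phi_0$.

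\textbf{Uniqueness.} Let $\Phi,\tilde\Phi$ be two harmonic maps asymptotic to $\Phi_0$. Then $f=\text{dist}_N(\Phi,\tilde\Phi)$ is subharmonic on $\mathbb{R}^3\backslash\Gamma$, bounded (by the triangle inequality), and tends to $0$ at infinity. The step we expect to be the main obstacle is that we have no control of $f$ at the axis, where $\Phi_0$ is singular. We would handle it with a removability argument. The line $\Gamma$ has zero Newtonian capacity in $\mathbb{R}^3$, so a bounded subharmonic function on $\mathbb{R}^3\backslash\Gamma$ extends to a subharmonic function on $\mathbb{R}^3$. Concretely, one applies the maximum principle to
\[
f+\epsilon\,\log\frac{\rho}{\rho_1}
\]
on the region $\{\rho\ge\rho_\epsilon\}\cap\{|x|\le R\}$ and lets $\epsilon\to0$; this uses that $\log\rho$ is harmonic off the axis. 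With the extension in hand, the maximum principle and the decay at infinity force $f\equiv0$. The same care is needed for the barrier in the existence step, where it is sidestepped by imposing $d_k=0$ on the tube boundaries.
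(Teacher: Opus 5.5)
The paper gives no proof of its own here; it imports the result from Weinstein, and your sketch is a correct reconstruction of essentially his argument. That argument has three parts: Dirichlet problems on an exhaustion that avoids $\Gamma$; the inequality $\Delta_3\,\text{dist}_N(\Phi_k,\Phi_0)\ge -|\tau(\Phi_0)|$, with the Newtonian potential of $|\tau(\Phi_0)|$ serving as a $k$-independent barrier; and, for uniqueness, removability of the polar set $\Gamma$, which is exactly where the boundedness built into the definition of ``asymptotic'' is needed.

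Your decay hypothesis costs nothing in this paper. The model map of Proposition~\ref{prop:modelmap} is an exact solution outside a bounded set, so its tension is bounded with compact support. Two small technical fixes are needed: round the edges of $\Omega_k$, and apply Hamilton's theorem (which requires a compact target) with target a closed geodesic ball of $N$ containing $\Phi_0(\partial\Omega_k)$, which is convex because $N$ is the hyperbolic plane.
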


In what follows,  we will write down a model map that exhibits any admissible rod structure for either ALE or ALF asymptotics.  In order to complete the proof of the main theorem we also need to show that the corresponding harmonic map shares the same rod structure and asymptotics as the model map. This is achieved by the following result.
\begin{proposition}
    If a harmonic map $\Phi$ is asymptotic to an ALE or ALF model map $\Phi_0$, then it has the same rod structure  as $\Phi_0$ and is also ALE or ALF.
\end{proposition}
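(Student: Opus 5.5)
Both conclusions will come from the Mazur distance between $\Phi$ and $\Phi_0$, which is controlled by $\text{dist}_N(\Phi,\Phi_0)$. For the symmetric space $N=SL(2,\mathbb{R})/SO(2)$ one has $\Psi:=\text{Tr}(\Phi\Phi_0^{-1}-I)=2(\cosh d-1)$ with $d=\text{dist}_N(\Phi,\Phi_0)$. Since $\Phi$ is asymptotic to $\Phi_0$, $\Psi$ is bounded on $\mathbb{R}^3\backslash\Gamma$ and tends to zero at infinity. I will treat the rod structure near the axis first, and then the behaviour at infinity.

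\textbf{Rod structure.} Fix a rod $I_i$ of $\Phi_0$ and use the basis adapted to its rod vector $v_i$, so that $\rho\Phi_0$ has the form \eqref{eq:gparam} with $h_0>0$ smooth. Writing $g=\rho\Phi$ in the same basis with general entries, expand $\text{Tr}(\Phi\Phi_0^{-1})$ using $\det\Phi=\det\Phi_0=1$. The entries of $\Phi_0^{-1}$ behave like $\rho h_0^{-1}$ and $h_0\rho^{-1}$, and boundedness of $\Psi$ near $I_i$ should force:
\begin{itemize}
\item $g(v_i,v_i)=O(\rho^2)$;
\item $g(v_i,\cdot)=O(\rho^2)$;
\item the remaining diagonal entry to stay bounded away from $0$ and $\infty$.
\end{itemize}
Then $v_i$ spans the kernel of $g$ on $I_i$, so $\Phi$ has the same rod vectors. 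Near a corner $z_i$ I will use the admissibility condition $\det(\underline v_i,\underline v_{i+1})=\pm1$: in the basis $(v_i,v_{i+1})$ both directions degenerate, and the same comparison shows $g$ vanishes at $z_i$. So the fixed points, and hence the whole rod structure, agree.

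For regularity I will rely on Weinstein's theory rather than reprove it. The harmonic map $\Phi$ with bounded distance to $\Phi_0$ has $h,w$ smooth in $(\rho^2,z)$ up to the axis, since elliptic regularity for the reduced axisymmetric equations gives this once the singular behaviour matches. Given that, \eqref{eq:nuint} integrates to give \eqref{eq:nunearI}, so $\Phi$ yields a metric smooth up to conical singularities.

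\textbf{Asymptotics.} In the basis $(\partial_\psi,\partial_\phi)$ of Proposition~\ref{prop:ALEFWeyl}, the formula from the uniqueness proof,
\[
\Psi=\frac{(W-W_0)^2+\rho^{-2}(\Omega-\Omega_0)^2}{WW_0},
\]
gives $W/W_0\to1$ and $\Omega-\Omega_0=o(\rho)$ once $\Psi\to0$. Here $W_0,\Omega_0$ come from the model, which agrees with \eqref{eq:modelALF} at large $R$. This is only $C^0$ information, with no rate. To upgrade it I plan to use that $\Phi$ and $\Phi_0$ are both (approximately) harmonic, so that $\Psi$ satisfies $\Delta_3\Psi\ge -C|\tau(\Phi_0)|\sqrt\Psi$-type inequalities. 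If the model map is exactly harmonic outside a compact set, which is arranged by taking it equal to the Gibbons--Hawking model there, then $\Psi$ is subharmonic at large $R$, and comparison with $R^{-1}$ gives the decay $\Psi=O(R^{-1})$.

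Interior Schauder estimates on dyadic annuli, applied to the harmonic map equation in the coordinates $\log W$ and $\Omega$, then turn this into derivative bounds $O_k$. Finally $\nu$ is obtained by integrating \eqref{eq:nuint}, with its leading term matching the model's. This reproduces the ALE or ALF fall-off of Proposition~\ref{prop:ALEFWeyl}, and converting back to $\bar r$ (respectively $r$) gives the required definitions.

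\textbf{Main obstacle.} I expect the hardest step to be this decay upgrade. A sharp rate is needed to verify $|\nabla^k h|=O(r^{-\tau-k})$ in the ALE case, and the weight of $\rho^{-2}$ near the axis complicates uniform estimates in $\Theta$. To handle this I would work in the adapted rod bases near $\Theta=0,\pi$, where the quantities $(h,w)$ are regular.
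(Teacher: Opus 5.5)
Your argument at infinity is the same as the paper's. The paper also evaluates $\text{Tr}(\Phi\Phi_0^{-1}-I)$ in the parametrisation \eqref{eq:Weylpolar2}, concludes $W\to W_0$ and $\Omega\to\Omega_0$, and stops there. The Remark after its proof says explicitly that controlling decay rates needs a separate asymptotic analysis, as in \cite{Han:2022mmc}, which it does not attempt. For the rod structure the paper only refers to the AF case, and your computation in rod-adapted bases is a sensible way to supply it. Near a corner it reads $g'_{11}/\mu^+_{i-1}+g'_{22}/\mu^-_{i-1}$ with $g'=M_{i-1}^{-1}gM_{i-1}^{-T}$, which does force $g\to 0$ there. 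One small correction: boundedness of $\Psi$ gives $g(v_i,v_i)=O(\rho^2)$, but for the mixed component Cauchy--Schwarz only gives $O(\rho)$. That is enough for the kernel statement; the $O(\rho^2)$ belongs with the regularity you defer to Weinstein.

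The gap is in your decay upgrade, which goes beyond what the paper proves.
\begin{itemize}
\item Subharmonicity of $\Psi$ for $\rho^2+z^2>R^2$ is fine. The model is exactly Gibbons--Hawking there, and the axis is polar in $\mathbb{R}^3$, so the bounded $\Psi$ extends across it.
\item The problem is that $\Psi$ is quadratic in the deviation: $\Psi\approx(\delta W/W_0)^2+(\delta\Omega)^2/(\rho W_0)^2\approx\tfrac12|h|^2$ for the torus block, measured in the model metric. So $\Psi=O(R^{-1})$ only gives $|h|=O(R^{-1/2})$.
\item In the ALF case this is strictly weaker than the required $|h|=O(r^{-1})$, and weaker than the $O(R^{-1})$ terms in Proposition~\ref{prop:ALEFWeyl}.
\item Schauder estimates on dyadic annuli transfer a $C^0$ rate to scaled derivatives but cannot improve the rate. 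So your claim that this ``reproduces the ALF fall-off'' does not follow.
\end{itemize}

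The fix is to run the comparison on $d=\text{dist}_N(\Phi,\Phi_0)$ itself rather than on $\Psi\sim d^2$. Since $N$ has nonpositive curvature and both maps are harmonic outside the ball, $d$ is subharmonic there, by convexity of the distance function on $N\times N$. Hence $d=O(R^{-1})$ and $\Psi=O(R^{-2})$, which is the correct ALF order and matches what the uniqueness proof finds.

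For ALE the same bound gives $|h|=O(R^{-1})=O(\bar r^{-2})$, which already meets the definition since any $\tau>0$ is allowed. So you do not need a sharp rate there: once derivative control is available, $\tau=4$ follows from \cite{BKN}. Derivative control uniformly up to the axis, and the asymptotics of $\nu$ including its additive constant, remain open in your sketch, just as they are in the paper. They should be stated as such rather than presented as consequences of the argument.
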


\begin{proof}
    The proof that $\Phi$ has the same rod structure as $\Phi_0$ is the same as in the AF case~\cite{Kunduri:2021xiv}. It remains to show $\Phi$ has the claimed asymptotics. We note that $\text{dist}_N(\Phi, \Phi_0)$ is equivalent to the Mazur distance $\text{Tr}(\Phi \Phi_0^{-1}-I)$. Therefore, since $\Phi \Phi_0^{-1}= g g_0^{-1}$, we can easily compute this in the parameterisation of the Gram matrix  \eqref{eq:Weylpolar2} which gives, 
    \begin{equation}
    \text{Tr}( \Phi \Phi_0^{-1}-I)= \frac{(W-W_0)^2+ \rho^{-2}( \Omega-\Omega_0)^2}{W W_0} \; .
    \end{equation}
    Therefore, if $\text{Tr}(\Phi \Phi_0^{-1}-I) \to 0$ as $R= \sqrt{\rho^2+z^2} \to \infty$ for all $0\leq \Theta \leq \pi$, then $W \to W_0$ and $\Omega\to \Omega_0$ for both asymptotic types. Hence, if $\Phi_0$ is ALE or ALF then so is $\Phi$.
\end{proof}

\begin{remark}
    Strictly speaking to show $\Phi$ is ALE and ALF according to our definitions we also need to control the rate of decay of the subleading terms.  This requires a detailed asymptotic analysis of harmonic maps with prescribed singularities, which in fact has only been recently completed for the case of asymptotically flat stationary and axisymmetric spacetimes~\cite{Han:2022mmc}. Presumably an analogous analysis can be performed for toric gravitational instantons, but we will not pursue this here.
\end{remark}

Therefore, the existence part of Theorem \ref{th:exist} reduces to exhibiting a model map with the required asymptotics, which we turn to next.  Before doing so, we remark that a corollary of the uniqueness and existence theorem is that a toric instanton with $n-1$ finite rods and given rod vectors is uniquely specified by $n-1$ continuous parameters, namely the finite rod lengths $\ell_i= z_i-z_{i-1}$ for $i=2, \dots, n$ (only differences of the $z$-coordinate have invariant meaning), and an additional asymptotic parameter $N\neq 0$ for the ALF case. On the other hand, one expects that the condition for the absence of a possible conical singularity over each finite rod removes $n-1$ parameters. Therefore,  generically, one expects that the moduli space of smooth toric instantons  with given rod vectors, is at most $0$-dimensional in the ALE case and $1$-dimensional for the ALF $N\neq 0$ case.  In contrast, in the AF case the same argument shows the moduli space is $2$-dimensional, since then one has two continuous asymptotic parameters $\beta, \Omega$~\cite{Kunduri:2021xiv}. The parameter $\beta$ encodes the norm of the Killing vector that is bounded at infinity, which in the ALF case is given by $N$ (there is no analogue of $\Omega$ for the $N\neq 0$ ALF case since the topology $L(p,q)$ fixes the identifications of the torus angles).

\begin{remark}
 In light of the above parameter counting, it is interesting to consider various examples. The Taub-Bolt instanton is a 1-parameter family of smooth solutions with a single finite rod, which arises from the conically singular 2-parameter family of Schwarzschild-Taub-NUT solutions,  in line with the above expectation.  However, the self-dual multi-Eguchi-Hanson and multi-Taub-NUT instantons are smooth ALE and ALF instantons with $n-1$ finite rods that depend on $n-1$ and $n$ parameters respectively (see Section \ref{sec:SD}). Therefore, it appears that in the self-dual case the conical singularities are automatically absent and these instantons are the unique solutions of Theorem \ref{th:exist} possessing their rod structure that are ALE and ALF.
\end{remark}

\subsection{Model map}

We now write down an explicit model map that exhibits any admissible rod structure for either ALE or ALF asymptotics. The construction is essentially the same as the AF case in the interior region~\cite{Kunduri:2021xiv}, but is more complicated in the asymptotic end and the adjacient transition regions.

\begin{prop} \label{prop:modelmap}
There exists an ALE and ALF model map which exhibits any admissible rod structure.
\end{prop}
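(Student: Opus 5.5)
Given an admissible rod structure $\{(I_i,v_i)\}_{i=1}^{n+1}$ with turning points $z_1<\dots<z_n$, the plan is to build $\Phi_0=\rho^{-1}g_0$ by patching two pieces, following the AF construction of~\cite{Kunduri:2021xiv}. The first piece is an interior map that realises the prescribed rod vectors near each rod and corner. The second is the explicit Gibbons--Hawking asymptotic model \eqref{eq:modelALF}, with $c=0$ (ALE) or $c=1/N^2$ (ALF) and the lens data $(p,q)$ fixed by the two semi-infinite rods. These are glued by cutoff functions.

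\textbf{Normalisation of the outer rods.} Since the rod structure is admissible, $\det(\underline v_1,\underline v_{n+1})$ is determined up to sign, and we take $p:=|\det(\underline v_1,\underline v_{n+1})|$. Use $GL(2,\mathbb Z)$ to put $\underline v_1=(0,1)$. Then $\underline v_{n+1}=\pm(p,-q)$ with $0\le q<p$ after a further shear fixing $\underline v_1$. With this normalisation the two semi-infinite rods of \eqref{eq:modelALF} carry exactly the rod vectors $(0,1)$ and $(p,-q)$ found above.

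\textbf{Interior construction.} In the region $R<R_1$ (with $R_1$ large compared with the $|z_i|$), I will use the AF-type model. Near each rod $I_i$ and corner $z_i$ it is written in a basis adapted to $v_i$ in the form \eqref{eq:gparam}, with $h$ built from the functions $\mu_i=\sqrt{\rho^2+(z-z_i)^2}-(z-z_i)$. Near each corner it is locally a flat $\mathbb R^4$ written in the adapted basis; admissibility $\det(\underline v_i,\underline v_{i+1})=\pm1$ makes the change of basis between adjacent rods unimodular, so this is consistent. Adjacent local pieces are interpolated in $z$ along the axis. The interpolations are done in the parameterisation \eqref{eq:gparam} in which the common degenerate direction is fixed, so $\rho^{-2}$-type singular terms never mix. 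This gives bounded tension because every ingredient is smooth in $(\rho^2,z)$ and the interpolations occur in regions where $h$ is bounded away from zero, exactly as in the AF case.

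\textbf{Gluing at large $R$ and the main obstacle.} For $R>R_2>R_1$, set $g_0=L\bar g L^T$ from \eqref{eq:modelALF}, with $\rho,z$ shifted so the model's corner sits at the origin. For $R_1<R<R_2$, only the two semi-infinite rods are visible, and the interior map there has the same rod vectors as the model. The difficulty is the transition region, which for ALE is more delicate than in AF: $W\sim 1/R$ is not bounded away from zero, and the model's Gram matrix grows like $R$. A naive cutoff $\chi(R)$ in the full matrix would therefore generate tension of size $|\nabla\chi|\cdot|\Phi^{-1}\partial\Phi|$ that is not obviously bounded, and would destroy the positive-definiteness and degeneracy structure near the axis. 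To avoid this, I will first arrange that in $R_1<R<R_2$ the interior map coincides exactly with the model map away from a neighbourhood of the axis. This is possible by defining the interior map's outer pieces directly as the model's $h,w$ in the adapted bases for $z\to\pm\infty$. Then I interpolate separately near each semi-infinite rod, in its adapted parameterisation \eqref{eq:gparam}, between the model's $(h,w)$ and the interior $(h,w)$. For $R_2$ large these differ only by $O(1/R)$ relative corrections (the shifts of the turning points). The tension is then computed from \eqref{eq:tensionsq}: derivatives of $\chi$ are $O(R^{-1})$ and multiply relative differences $O(R^{-1})$, giving bounded and indeed decaying tension. Finally, $\mathrm{dist}_N(\Phi_0,\Phi_{\mathrm{model}})\to0$ at infinity, by the formula for $\mathrm{Tr}(\Phi\Phi_0^{-1}-I)$ in terms of $(W,\Omega)$ above. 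So $\Phi_0$ has ALE or ALF asymptotics and, via Theorem~\ref{th:weinstein}, is a valid model map. Verifying that $|\tau(\Phi_0)|$ stays bounded uniformly up to the axis in this transition region is the step I expect to need the most care.
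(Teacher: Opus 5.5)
Your construction matches the paper's in all essentials. It uses AF-type pieces near the corners and finite rods, the lens-space Gibbons--Hawking model \eqref{eq:modelALF} near infinity, and gluing along the two semi-infinite rods in an adapted near-axis parameterisation. Indeed, the paper's $H_1$ and $H_{n+1}$ are just \eqref{eq:gparam} written with $A=h+\rho^2w^2/h$, $B=w/h$, up to the order of the basis vectors and the $z$-dependent changes of basis $N_1,N_{n+1}$, which respect the rod vector.

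\textbf{The gap.} It lies in the step you defer, bounded tension in the gluing region, and the argument you sketch for it aims at the wrong difficulty.

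There is no decay problem to solve. The paper uses the model \emph{exactly} on $\rho^2+z^2>R^2$ and on the near-axis boxes $\mathcal{S}_1,\mathcal{S}_{n+2}$. It places $\mathcal{T}_1,\mathcal{T}_{n+1}$ as strips $T_i\times[0,\rho_0)$ in a bounded region, and fills the remaining compact off-axis region with an arbitrary smooth extension. So $W\sim 1/R$ never enters, and the tension is bounded away from the axis by continuity.

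Your estimate ``$|\nabla\chi|=O(R^{-1})$ times relative differences $O(R^{-1})$'' is also false in the ALF case, if the interior data along the semi-infinite rods are the flat, $\mu$-built ones. On $I_{n+1}$ the model has $\bar g_{22}=4/H=4z/(cz+2)\to 4N^2$, while the flat piece has nondegenerate norm $2(z-z_n)$. In any case, such an estimate does not address the only real danger, which is the $1/\rho$ terms at the axis.

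\textbf{The check actually needed.} It is a short near-axis computation. With $X=\rho/h$ the target metric is $X^{-2}\td X^2+X^2\td w^2$. Using $\Delta_3\log\rho=0$ one finds, up to a constant factor,
$|\tau|^2=(\Delta_3\log h+\rho^2h^{-2}|\nabla w|^2)^2+(\rho h^{-1}\Delta_3 w+2h^{-1}\partial_\rho w+O(\rho))^2$.
This stays bounded as $\rho\to0$ provided $h>0$ and $h,w$ are smooth in $(\rho^2,z)$, since then $\rho^{-1}\partial_\rho h$, $\partial_\rho w$ and $\rho\,\Delta_3 w$ are bounded.

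This is exactly the paper's requirement that $A,B$ be smooth functions of $\rho^2$; it finds $E^1_{~1}=\partial_\rho A/(A\rho)+O(1)$, etc. It is also the same reason you already give for the interior.

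\textbf{Determinant normalisation.} You should fix this before interpolating $(h,w)$. \eqref{eq:gparam} in a lattice basis has $\det g=\rho^2$, whereas \eqref{eq:modelALF} has $\det g=p^{-2}\rho^2$. The paper builds the factor $p^{-1}$ into $M_{i-1}$ and $N_i$ via $\gamma(z)$. Alternatively, you can rescale $(\rho,z)\mapsto(\rho,z)/p$ in the model.
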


\begin{proof}
Consider the general case with $n$ corners, so that there are $n+1$ rods $I_i$.  We will  divide the $(\rho, z)$ half-plane into several regions.
On the interior of each rod we define a transition region $T_i \subset  I_i$ and thicken them into regions $\mathcal{T}_i= T_i \times [0,\rho_0)$ for some $\rho_0>0$.  Next, define  $\mathcal{R}$ to be the region  $|z| < R$ and $0\leq \rho < \rho_0$.  Then $\mathcal{R} \setminus \mathcal{T}_i$ leaves $n+2$ regions $\mathcal{S}_i$ separated by the $n+1$ transition regions $\mathcal{T}_i$. We denote the intersection of these regions with the axis by $S_i = \mathcal{S}_i \cap \{ \rho=0 \}$. Note that for $i=2, \dots, {n+1}$ the regions $\mathcal{S}_i$ include  part of  the rods $I_{i-1}$ and $I_i$ and the corner $z_{i-1}$ that separates them, whereas $\mathcal{S}_1$ and $\mathcal{S}_{n+2}$ sit over $I_1$ and $I_{n+1}$ respectively.  Note also that the transition regions $\mathcal{T}_i$ do not contain any of the corner points $z_i$. We also define a region near infinity $C_R$ by  $\rho^2+z^2 > R^2$. See Figure \ref{fig:model} for a depiction of these regions.

\begin{figure}[h!]
\centering
{
\begin{tikzpicture}[scale=1.5, every node/.style={scale=0.6}]
\fill[ fill=black, opacity=0,very thick](-4,4)--(-4,0)--(4,0)--(4,4);
\draw[black,thick](-4.2,0)--(-1.8,0);
\draw[black,thick](1.8,0)--(4.2,0)node[black,font=\large,right=.2cm]{\color{black}{$z$}};
\draw[black,thick,dashed](-1.8,0)--(1.8,0);
\draw[black,thick](0,0)--(0,4.2)node[black,font=\large,above=.2cm]{\color{black}{$\rho$}};
\draw[black,fill=black] (-2.4,0) circle [radius=.07] node[black,below=.4cm]{$z_1$};
\draw[black] (-1.5,1.2) node[black,above=.4cm]{};
\draw[black] (-1.5,3.6) node[black,above=.4cm]{$C_R$};
\draw[black,fill=black] (-3.8,0) circle [radius=.03] node[black,below=.4cm]{$-R$};
\draw[black,fill=black] (3.8,0) circle [radius=.03] node[black,below=.4cm]{$R$};
\draw[gray,dashed](-3.8,1.2)--(3.8,1.2);
\draw[gray](-3.8,0)--(-3.8,1.2);
\draw[gray](3.8,0)--(3.8,1.2);
\draw[black] (-3.4,0) node[black,above=.8cm]{$\mathcal{S}_{1}$};
\draw[gray](-3.2,0)--(-3.2,1.2);
\draw[black] (-3.0,0) node[black,above=.8cm]{$\mathcal{T}_{1}$};
\draw[gray](-2.6,0)--(-2.6,1.2);
\draw[black] (-2.4,0) node[black,above=.8cm]{$\mathcal{S}_{2}$};
\draw[gray](-2.2,0)--(-2.2,1.2);
\draw[black] (-2.1,0) node[black,above=.8cm]{$\mathcal{T}_{2}$};
\draw[gray](-1.95,0)--(-1.95,1.2);
\draw[black] (-1.8,0) node[black,above=.8cm]{$\mathcal{S}_{3}$};
\draw[black,fill=black] (-1.8,0) circle [radius=.07] node[black,below=.4cm]{$z_2$};

\draw[gray](-1.6,0)--(-1.6,1.2);
\draw[gray](1.6,0)--(1.6,1.2);
\draw[black] (1.75,0) node[black,above=.8cm]{$\mathcal{S}_{n}$};
\draw[black,fill=black] (1.8,0) circle [radius=.07] node[black,below=.4cm]{$z_{n-1}$};
\draw[black] (2.0,0) node[black,above=.8cm]{$\mathcal{T}_{n}$};
\draw[gray](1.9,0)--(1.9,1.2);
\draw[black] (2.4,0) node[black,above=.8cm]{$\mathcal{S}_{n+1}$};
\draw[black] (1.35,1.2) node[black,above=.4cm]{};
\draw[gray](2.6,0)--(2.6,1.2);
\draw[black] (2.9,0) node[black,above=.8cm]{$\mathcal{T}_{n+1}$};
\draw[gray](3.2,0)--(3.2,1.2);
\draw[black] (3.4,0) node[black,above=.8cm]{$\mathcal{S}_{n+2}$};
\draw[black](0,0) node[black,below=.4cm]{\color{black}{}};
\draw[gray](2.2,0)--(2.2,1.2);
\draw[black,fill=black] (2.4,0) circle [radius=.07] node[black,below=.4cm]{$z_{n}$};
\draw[black] (3.8,0) arc (0:180:3.8cm);
\end{tikzpicture}}
\caption{Regions for the model map.\label{fig:model} }
\end{figure}
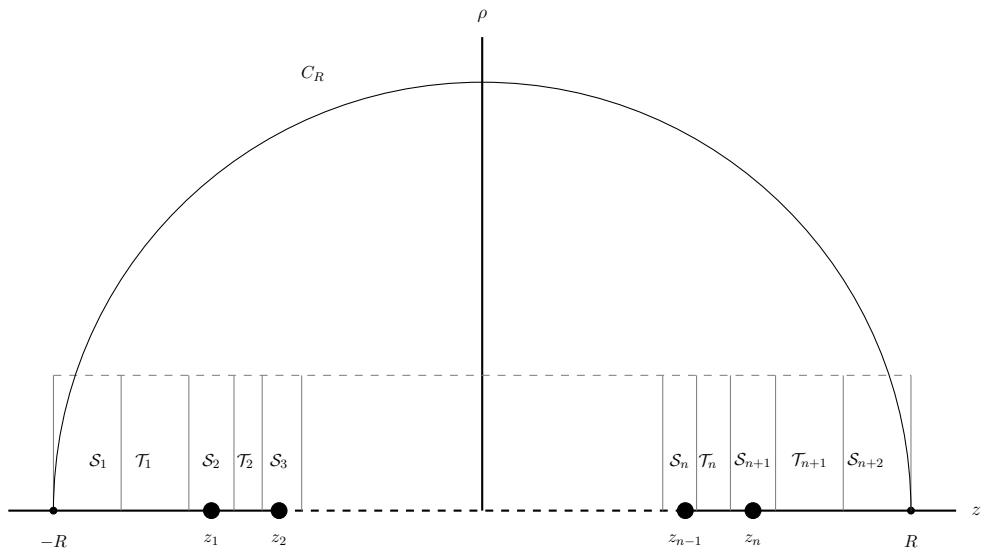

 We will define the model map $\Phi_0$ by specifying it on each of these regions. On the  remaining compact region $\rho^2+z^2\leq R^2$ and $\rho\geq \rho_0$ we take $\Phi_0$ to be any smooth extension so that this defines the model map everywhere.   It will be useful to define the following functions  
\begin{equation}
\mu_i^\pm := \sqrt{\rho^2+ (z-z_i)^2} \mp (z-z_i)
\end{equation}
which have the property that $\mu_i^+=0$ for $\rho=0, z>z_i$ and $\mu_i^-=0$ for $\rho=0, z<z_i$ and satisfy $\Delta_3 \log \mu_i^\pm=0$.

We are now ready to specify our model map. We will work in the basis of torus Killing fields defined by our asymptotic model \eqref{eq:modelALF} which in particular requires $\det g= p^{-2} \rho^2$.  First we describe the interior regions which are common to both the ALE and ALF case. We then describe the asymptotic ALE or ALF region in unified manner (we give an alternate simpler description for the ALE case in the Appendix). Finally, we verify the tension of our map is indeed bounded.  \\

\paragraph{\it Interior region}

On the regions $\mathcal{S}_i$, $i=2, \dots, n+1$, take
\begin{equation}\label{eq:modmapinterior}
g =  M_{i-1} D_{i-1} M_{i-1}^T , \qquad D_{i-1} := \begin{pmatrix} \mu_{i-1}^+ & 0 \\  0 & \mu_{i-1}^- \end{pmatrix} , \qquad  M_{i-1} := \begin{pmatrix}  p^{-1} v^2_{i-1} & \epsilon_{i-1} v^2_{i} \\ -p^{-1} v^1_{i-1}  & -\epsilon_{i-1} v^1_{i} \end{pmatrix}
\end{equation}  
Observe that 
\[
M_{i-1}^{T} \begin{pmatrix} v^1_i \\ v^2_i \end{pmatrix}= - \epsilon_{i-1} p^{-1} \begin{pmatrix} 1 \\ 0 \end{pmatrix}  \; ,\qquad M_{i-1}^{T} \begin{pmatrix} v^1_{i-1} \\ v^2_{i-1} \end{pmatrix} =\begin{pmatrix} 0 \\ 1 \end{pmatrix}  \; ,
\]
which implies that 
if $z \in {S}_i$, 
\begin{equation}
\text{ker}(g)_{\rho =0} = \begin{cases} \text{span}(\underline{v}_{i-1}) & z < z_{i-1} \\ \text{span}(\underline{v}_i) & z > z_{i-1} \end{cases}
\end{equation} so these indeed exhibit the required rod structure in these regions. Also  $\det M_i=p^{-1}$ 
  in view of the admissibility condition and hence $\det g =p^{-2}\rho^2$.   It is useful to note that on these regions we can also  write 
\begin{equation}
g = \tilde{M}_{i-1}\tilde{D}_{i-1} (\tilde{M}_{i-1})^T ,\qquad \tilde{D}_{i-1}:=  \begin{pmatrix} \mu_{i-1}^- & 0 \\  0 & \mu_{i-1}^+ \end{pmatrix},  \qquad \tilde{M}_{i-1} = \begin{pmatrix} v^2_i& -\epsilon_{i-1}p^{-1} v^2_{i-1} \\ -v^1_{i} &   \epsilon_{i-1} p^{-1} v^1_{i-1} \end{pmatrix}
\end{equation} 
where the two forms are related by $\tilde{M}_{i-1} = M_{i-1} C_{i-1}$  and $C_{i-1}= \epsilon_{i-1}\sigma$ where $\sigma$ is antisymmetric with $\sigma_{12}=-1$.

On the transition regions $\mathcal{T}_i$, $i=2, \dots, n$, which separate $\mathcal{S}_i$ and $\mathcal{S}_{i+1}$, take
\begin{equation}\label{g_T}
g = N_i(z) \tilde{D}_{i-1}^{\lambda(z)} D_i^{1-\lambda(z)} N_i(z)^T, 
\end{equation} where
\begin{equation}
N_i (z) = \begin{pmatrix} \gamma(z) v^2_i & \alpha^2_i(z) \\  -\gamma(z) v^1_i  & -\alpha^1_i(z)\end{pmatrix},\qquad \underline{\alpha}_i(z) :=  p^{-1} \gamma(z)^{-1} \left( -\epsilon_{i-1} \lambda(z) \underline{v}_{i-1} + \epsilon_i \underline{v}_{i+1} (1-\lambda(z)) \right) \; ,
\end{equation} and 
we have defined smooth  functions $\lambda(z), \gamma(z)$ such that  $0\leq \lambda(z)\leq 1$ and $
\gamma(z)>0$ and 
\[
\lambda(z) = \begin{cases} 1 \\ 0 
\end{cases}  \qquad \gamma(z) =\begin{cases}
    1  \\ p^{-1}  
 \end{cases}
 \qquad \qquad \text{if} 
\qquad 
 \begin{array}{c} z\in S_i \\  z\in S_{i+1} \end{array} \;.
\]
This ensures that $N_i$ interpolates from $\tilde{M}_{i-1}$ for $z\in S_i$ to $M_i$ for $z\in S_{i+1}$ and hence $g$ interpolates between its values on $\mathcal{S}_i$ and $\mathcal{S}_{i+1}$.  For $z \in {T}_i$, we have $\det N_i=p^{-1}$ and
\begin{equation}
g|_{\rho =0} = N_i(z)\begin{pmatrix} 4 |z-z_{i-1}|^{\lambda(z) }| z-z_i|^{1-\lambda(z)} & 0 \\ 0 & 0 \end{pmatrix} N_i(z)^{T}
\end{equation}
and since 
\begin{equation}\label{N_A}
N_i(z)^T \begin{pmatrix} v_i^1 \\ v_i^2 \end{pmatrix} =\det(\underline{v}_i, \underline{\alpha}_i) \begin{pmatrix} 0 \\ 1 \end{pmatrix} = p^{-1} \gamma(z)^{-1}\begin{pmatrix} 0 \\ 1 \end{pmatrix} ,
\end{equation}
we deduce 
\begin{equation}
\text{ker} (g)_{\rho =0} = \text{span}(\underline{v}_i) ,
\end{equation}
so we have the correct rod structure on the transition region $T_i$. \\

\paragraph{\it Asymptotic ALE or ALF end}
It remains to specify $g$ on the regions $\mathcal{S}_1, \mathcal{T}_1, \mathcal{T}_{n+1}\mathcal{S}_{n+2}$ and on $C_R$. These now depend on the ALE or ALF condition. We will treat these simultaneously using the asymptotic model developed earlier (in the Appendix we give an alternative simpler model map for the ALE case).
Thus on $C_R, \mathcal{S}_1, \mathcal{S}_{n+2}$ we take the model map to be given by the asymptotic model \eqref{eq:modelALF} so the rod vectors on the semi-infinite rods are
\begin{equation}
\underline{v}_1= \begin{pmatrix}
    0 \\ 1
\end{pmatrix}, \qquad \underline{v}_{n+1}= \begin{pmatrix}
    p \\ -q
\end{pmatrix}
\end{equation}
In order to ensure that these are the rod vectors on $I_1$ and $I_{n+1}$ we use the translation freedom in $z$ to set $z_1= 0$, so $I_1$ corresponds to $z<0$ and $I_{n+1}$ to $z>z_n>0$.  Also note that we must take $C_R, \mathcal{S}_1, \mathcal{S}_{n+1}$ to be at sufficiently large $r$ so as to ensure that $H>0$ in the asymptotic model \eqref{eq:modelALF} (this is only relevant in the ALF case with $N<0$), which is always possible to achieve by taking the constant $R$ and the transition regions $\mathcal{T}_1, \mathcal{T}_{n+1}$ large enough compared to $1/N^2$.
It remains to consider the transition regions. 

First, on $\mathcal{T}_1$, which separates $\mathcal{S}_1$ and $\mathcal{S}_2$, we write
 \begin{equation}
 g =N_1(z) H_1 N_1(z)^T, \qquad H_1 = \begin{pmatrix}
    A & \rho^2 B \\ \rho^2 B & \frac{\rho^2+\rho^4 B^2}{A} 
 \end{pmatrix}, \qquad  N_1(z)= \begin{pmatrix}
         p^{-1} & \beta(z) \\ 0 & 1
    \end{pmatrix}  \; ,
 \end{equation}
 where $\beta(z), A(\rho, z), B(\rho, z)$ are smooth functions such that $A>0$ and  
 \[
\beta(z) =\begin{cases}
    \frac{q}{p} \\ \epsilon_1 v_2^2
\end{cases} \qquad A= \begin{cases}
    \bar{g}_{11} \\ \mu_1^+ 
\end{cases}
\qquad 
B= \begin{cases}
    \frac{\bar{g}_{12}}{\rho^2} \\ 0 
\end{cases}
\qquad \qquad \text{if}
\qquad 
 \begin{array}{c} z\in S_1 \\  z\in S_{2} \end{array} \;,
 \]
 where $\bar{g}_{ij}$ are the components of the Gram matrix of the asymptotic model \eqref{eq:modelALF} and 
 \begin{equation}
 \frac{\bar{g}_{12}}{\rho^2} = \frac{H}{4} \left( 1 - \frac{1}{ r^2 H^2} \right) \label{eq:g12}
 \end{equation} is a smooth function on $\mathcal{S}_1$.  Note that we can choose $A$ to be strictly positive since it interpolates between two strictly positive functions (recall $\mu_1^+>0$ fo $z<z_1$). This ensures that $N_1$ interpolates between $L$ and $M_1$, which is 
 \[
 M_1 = \begin{pmatrix}
     p^{-1} & \epsilon_1 v_2^2 \\ 0 & 1 
 \end{pmatrix}  \; ,
 \]
 and that $H_1$ goes between  $\bar{g}$ and $D_1$. Thus this choice smoothly interpolates between the values of $g$ on $\mathcal{S}_1$ and $\mathcal{S}_2$ defined above. Furthermore, $\det g = p^{-2} \rho^2$ and $N_1^T \underline{v}_1= \underline{v}_1$ so $v_1 \in \text{ker} g|_{\rho=0}$ so $v_1$ is the rod vector on the whole transition region ${T}_1$.

 Second, on the transition region $\mathcal{T}_{n+1}$, separating $\mathcal{S}_{n+1}$ and $\mathcal{S}_{n+2}$, take
 \[
 g = N_{n+1}(z) H_{n+1} N_{n+1}(z)^T , \qquad H_{n+1}= \begin{pmatrix}
  \frac{\rho^2+\rho^4 B^2}{A}    & \rho^2 B \\ \rho^2 B & A
 \end{pmatrix}, \qquad  N_{n+1}(z)= \begin{pmatrix}
         \frac{1}{p a}+ \frac{q c}{p}  & \frac{qa}{p}  \\ c & a 
     \end{pmatrix}
 \]
 for smooth functions $A(\rho, z), B(\rho,z), a(z), c(z)$ such that $A>0, a>0$, where we have chosen a parameterisation to ensure $\det H_{n+1}=\rho^2$ and $\det N_{n+1}=p^{-1}$.  We require that 
\[
a(z) =\begin{cases}
    -\epsilon_n p \\ 1
\end{cases}
\qquad
c(z) = \begin{cases}
-p^{-1} v_n^1 \\ 0
\end{cases}  \; ,\qquad \qquad \text{if}
\qquad 
 \begin{array}{c} z\in S_{n+1} \\  z\in S_{n+2} \end{array} \;,
 \]
 to ensure that $N_{n+1}$ interpolates from 
 \[
 M_{n+1}= \begin{pmatrix}
     p^{-1} v_n^2 & - \epsilon_n q \\ - p^{-1} v_n^1 & -\epsilon_n p 
 \end{pmatrix}
 \]
 on $\mathcal{S}_{n+1}$ to $L$ on $\mathcal{S}_{n+2}$, and the requirement $a>0$ implies we must fix $\epsilon_n=-1$ (recall $p>0$). We also require 
 \[
 A= \begin{cases}
     \mu_n^- \\ \bar{g}_{22}
 \end{cases} \qquad B= \begin{cases}
     0 \\ \frac{\bar{g}_{12}}{\rho^2}
 \end{cases}
 \qquad \qquad \text{if}
\qquad 
 \begin{array}{c} z\in S_{n+1} \\  z\in S_{n+2} \end{array} \;,
 \] 
 to ensure that $H_{n+1}$ goes from $D_n$ to $\bar{g}$ the asymptotic model \eqref{eq:modelALF}, where \eqref{eq:g12} is also smooth on $\mathcal{S}_{n+1}$.  Note we can choose $A$ to be strictly positive since it interpolates between two strictly positive functions (recall $\mu_n^->0$ for $z>z_n$).  Thus this choice smoothly interpolates between $g$ on $\mathcal{S}_{n+1}$ and $\mathcal{S}_{n+2}$. Again note we have $\det g = p^{-2} \rho^2$ and 
 \[
N_{n+1}^T \begin{pmatrix} p \\ - q \end{pmatrix} = a^{-1} \begin{pmatrix}
    1 \\  0
\end{pmatrix}  \; ,
 \]
 so $v_{n+1} \in \text{ker}\, g|_{\rho=0}$ which implies $v_{n+1}$ is the rod vector on the whole of the transition region $\mathcal{T}_{n+1}$.\\ 

\paragraph{\it Boundedness of tension} The requirement that $| \tau |$ is bounded is satisfied if the components of the matrix $E:= \nabla \cdot (g^{-1}\nabla g)$ are bounded, as can been seen from equation \eqref{eq:tensionsq} and the identity $\nabla\cdot (\Phi^{-1}\nabla \Phi)=\nabla \cdot ( g^{-1}\nabla g)$.  Recall, that in the compact region $\rho^2+z^2\leq R^2$ and $\rho\geq \rho_0$ the model map is any smooth extension of the map defined on the other regions, hence by continuity $E$ must be bounded in this region. In the regions $C_R, \mathcal{S}_1, \mathcal{S}_{n+2}$, the model map is taken to be \eqref{eq:modelALF} which is a solution and hence has vanishing tension.  Similarly, the  map on the regions $\mathcal{S}_{i=2, \dots, n+1}$ is a solution so the tension also vanishes here. Thus one needs to check boundedness of the tension on the transition regions. In fact, this was checked for $\mathcal{T}_{i=2, \dots, n}$ in the AF case~\cite{Kunduri:2021xiv}. Therefore, it remains to check the regions $\mathcal{T}_1$ and $\mathcal{T}_{n+1}$. By explicit calculation one can check that the components of $E$ on both of these transition regions are indeed bounded as $\rho \to 0$ provided the functions $A$ and $B$ are smooth functions of $\rho^2$ (since this guarantees $\partial_\rho A=0= \partial_\rho B$ at $\rho=0$). In particular,  on $\mathcal{T}_1$ one finds that as $\rho\to 0$
\begin{gather}
E^1_{~1}=- E^2_{~2}= \frac{\partial_\rho A}{A \rho}+ O(1),\qquad E^1_{~2}= \frac{3p \partial_\rho (A B) \rho}{A^2} + O(\rho^2) \\\ E^2_{~1}= \frac{-2p \beta \partial_\rho A + (\partial_\rho A) A B +3 A^2 \partial_\rho B}{Ap \rho} + O(1) \; ,
\end{gather}
so if we choose $A$ and $B$ to be smooth functions of $\rho^2$ then $\partial_\rho A= O(\rho)$ and $\partial_\rho B = O(\rho)$ and the tension is indeed bounded. On $\mathcal{T}_{n+1}$ similar expressions hold and again choosing $A$ and $B$ to be smooth functions of $\rho^2$ implies $E^i_{~j}= O(1)$, so the tension is also bounded in this region.
\end{proof}

\section{Self-dual instantons}
\label{sec:SD}

In this section we consider gravitational instantons that are self-dual, that is, have a Riemann tensor that is self-dual.  In the ALF-$A_k$ case Minerbe has shown that these are exhausted by the the multi-Taub-NUT instantons~\cite{Minerbe:2009gj}. On the other hand, Kronheimer's construction shows that in the ALE case with $\Gamma$ cyclic these correspond to the multi-Eguchi-Hanson instantons. These results do not assume the existence of isometries and show that the metric must be of Gibbons-Hawking type. On other hand, it is well known that the local form of a toric self-dual instanton is given by the Gibbons-Hawking metrics.  We will now perform a direct global analysis of this family of metrics to establish an elementary proof of the aforementioned results in the toric setting.

\begin{proposition}
    Any self-dual toric ALE or ALF instanton $(M, g)$ is given by a multi-Eguchi-Hanson or multi-Taub-NUT metric. 
\end{proposition}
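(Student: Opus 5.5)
The plan is to reduce to the local Gibbons--Hawking form and then pin down the harmonic function globally using the rod structure and the asymptotics. Since $(M,\mathbf{g})$ is Ricci-flat with self-dual curvature and admits a torus action, I would first pick a Killing field $\xi$ in the Lie algebra of $T$ that is triholomorphic. Such a field exists: self-duality gives a parallel (hence hyperkähler, $M$ being simply connected) structure, and $T$ acts on the $2$-sphere of parallel complex structures through an at most one-dimensional subgroup of $SO(3)$, so some nonzero $\xi\in\mathfrak{t}$ acts trivially. The Gibbons--Hawking ansatz then gives, on the open set where $\xi\neq 0$,
\[
\mathbf{g}=V^{-1}(\td\tau+\alpha)^2+V\,\delta_{\mathbb{R}^3},\qquad \td\alpha=\star_3\td V,
\]
where $x\in\mathbb{R}^3$ is the hyperkähler moment map of $\xi$ and $V=|\xi|^{-2}$ is harmonic. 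The second Killing field descends to a rotation of $\mathbb{R}^3$, so $V$ is axisymmetric. I would then check that the cylindrical coordinates $(\rho,z)$ of this $\mathbb{R}^3$ are, up to a constant factor, exactly the Weyl--Papapetrou coordinates. This follows because $\det g=V^{-1}\cdot V\rho^2=\rho^2$ in the basis $(\xi,\partial_\varphi)$, which fixes $\rho$, and $z$ is then its harmonic conjugate. By the earlier Proposition on global Weyl coordinates, the moment map is therefore a global coordinate on the interior of $\hat M$, and $V$ is a positive axisymmetric harmonic function on $\mathbb{R}^3\setminus\Gamma$.

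Next I would determine the singularities of $V$ on the axis from the rod structure. On a rod $I_i$ whose rod vector $v_i$ is not parallel to $\xi$, the norm $|\xi|^2=V^{-1}$ stays positive and smooth, so $V$ extends smoothly across the interior of $I_i$. At a corner $z_i$, $\xi$ has an isolated zero, and smoothness of the $\mathbb{R}^2\times\mathbb{R}^2$ fixed point forces $V\sim k_i/(2|x-x_i|)$ with $k_i$ a positive integer fixed by $\det(\underline v_i,\underline v_{i+1})=\pm1$; in fact admissibility should force $k_i=1$ after normalising $\tau$. The remaining possibility is a rod on which $\xi$ itself vanishes. That would be a totally geodesic surface of zeros of the triholomorphic field, which cannot occur, since zeros of a triholomorphic Killing field are isolated. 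Hence $\xi$ is not parallel to any rod vector, and $V-\sum_i \frac{1}{2|x-x_i|}$ extends to a harmonic function on all of $\mathbb{R}^3$.

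Finally I would use the asymptotics to fix this entire harmonic function. Comparing with Proposition \ref{prop:ALEFWeyl}, with $W^{-1}$ playing the role of $V$ up to the change of basis from $(\partial_\psi,\partial_\phi)$ to $(\xi,\partial_\varphi)$, the function $V$ is bounded at infinity: $V\to c$ with $c=0$ in the ALE case and $c>0$ in the ALF case. Liouville's theorem then gives
\[
V=c+\sum_{i=1}^{n}\frac{1}{2|x-x_i|},
\]
which is multi-Eguchi--Hanson for $c=0$ (with $p=n$) and multi-Taub--NUT for $c>0$. The AF case $N=0$ is excluded because there the bounded Killing field has $V\to$ const with no monopole at infinity, forcing total charge zero, which is impossible with positive $k_i$.

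I expect the main obstacle to be the second step: carefully matching $\xi$ to the $2\pi$-periodic basis and showing that admissibility pins down the residues as exactly one at each corner. I would handle this by computing the rod vectors of the Gibbons--Hawking metric explicitly, namely $\partial_\varphi\pm\tfrac{k}{2}\partial_\tau$-type combinations on the segments between consecutive nuts, and imposing the determinant condition.
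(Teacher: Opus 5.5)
Your proposal is correct and follows essentially the same route as the paper. Both arguments write the metric in Gibbons--Hawking form in Weyl--Papapetrou coordinates, show the harmonic function is regular on rod interiors so it has only simple poles at the corners with a common residue, and then apply Liouville using the boundedness at infinity supplied by the ALE/ALF asymptotics. The differences lie only in sub-steps: you derive the Gibbons--Hawking form from a triholomorphic Killing field; you exclude singularities on rods via isolated zeros of that field rather than the paper's $e^{2\nu}=cH$ argument; and you obtain equal residues from the admissibility condition $\det(\underline v_i,\underline v_{i+1})=\pm1$ rather than the paper's common bolt period. Note also that it is $W$, not $W^{-1}$, that corresponds to $V$.
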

\begin{proof}
    In Weyl-Papapetrou coordinates $(\rho,  z, \tilde \phi^i)$ a Ricci-flat toric K\"ahler metric can be written as an axisymmetric Gibbons-Hawking metric 
\begin{equation}\label{eq:GH}
\mathbf{g} = H^{-1}( \td \tilde{\phi}^2+ \chi \td \tilde \phi^1)^2+ H ( \td \rho^2+ \td z^2+ \rho^2 (\td \tilde{\phi}^1)^2)
\end{equation}
where $\td \chi = \hat\star \rho \td H$, so $H$ is an axisymmetric harmonic function in $\mathbb{R}^3$~\cite{Kunduri:2021xiv}. In order to arrive at this form of the metric a $GL(2, \mathbb{R})$ transformation on the torus coordinates must be performed and hence $\tilde \phi^i$ are not necessarily $2\pi$-periodic.  

Observe that the norm of the Killing field $\partial_{\tilde\phi^2}$ is $H^{-1}$. This implies that $H^{-1}$ must be a smooth nonnegative function on $M$, so $H$ is smooth and positive wherever $|\partial_{\tilde\phi^2}|>0$ and is singular at any fixed points of $\partial_{\tilde\phi^2}$.  Near each rod $I_i$ smoothness requires that as $
H^{-1}= f(z) + O(\rho^2)$ as $\rho \to 0$, 
where $f(z)$ is a smooth non-negative function on the interior of each rod.  On the other hand, comparing to  the Gibbons-Hawking metric \eqref{eq:GH} shows that the conformal factor $e^{2\nu}= c H$  where $c>0$ is a constant depending on the $GL(2, \mathbb{R})$ transformation required to obtain the torus coordinates $\tilde{\phi}^i$. Hence, from \eqref{eq:nunearI} we deduce that $H$ is smooth and positive on the interior of any rod, so $f(z)$ is a strictly positive function on the interior of each rod.  However, since $\partial_{\tilde\phi^2}$ must vanish at the fixed points of the torus symmetry, we deduce that the zeros of $\partial_{\tilde\phi^2}$ precisely correspond to the corners of the orbit space $z=z_i$ and $f(z)$ vanishes at these points.  This implies that $H$ must have isolated singularities at $z=z_i$ and is smooth everywhere else. 

From the definition of $\chi$  it follows that near each rod $\partial_z \chi = O(\rho^2)$ and hence we can define constants $\chi_i:= \chi|_{I_i}$. Therefore, the $2\pi$-periodic rod vectors are $v_i = \partial_{\tilde\phi^1}- \chi_i \partial_{\tilde\phi^2}$ where we have chosen the normalisation to ensure the conical singularity in \eqref{eq:GH}  as $\rho \to 0$ over $I_i$ is absent.  The metric induced on the corresponding 2-surface is 
\[
\frac{\td z^2}{f(z)}+ f(z)\td \xi_i^2
\]
where $\xi_i := \tilde{\phi^2}+ \chi_i \tilde{\phi}^1$ and the condition for this to extend to a smooth metric on $S^2$ is that $\xi_i$ is identified with period $\ell_i=4\pi/ f'(z_{i-1})$ and $f'(z_{i-1})=-f'(z_i)\neq 0$. In fact, in terms of the original angles the periodic identifications imply $(\tilde{\phi}^1, \tilde{\phi}^2)\sim (
\tilde{\phi}^1,\tilde{\phi}^2+ \ell_i)$  and hence these must all be multiples of some basic period $(\tilde{\phi}^1, \tilde{\phi}^2)\sim (
\tilde{\phi}^1,\tilde{\phi}^2+ \ell)$ so in fact $\xi_i\sim \xi_i +\ell$ and hence $\ell_i=\ell$ for all $i$ after all.  Therefore,  near any endpoint of each $I_i$, 
\[
H_{\rho=0} = \frac{\ell}{4\pi |z-z_i|} + O(1)
\]
as $z\to z_i$ and since $H$ is an axisymmetric harmonic function on $\mathbb{R}^3$ with isolated singularities at $z=z_i$ this implies $H$ must have a simple pole at each corner $z_i$. Thus we can write
\[
H= H_0+ \frac{\ell}{4\pi} \sum_{i=1}^n \frac{1}{  \sqrt{ \rho^2+ (z-z_i)^2}}
\]
for some harmonic function $H_0$ that is smooth everywhere on $\mathbb{R}^3$.  Finally, both ALE and ALF asymptotics imply that $|\partial_{\tilde\phi^2}|^2$  grows as $r^2$ or approaches a constant, which in turn implies $H$ is either $O(r^{-2})$ or $O(1)$.  Thus in either case $H_0$ is bounded and hence must be a constant. 

It is convenient to rescale the angle $\phi^2= \tilde{\phi}^2 (4\pi/\ell)$ so it is $4\pi$ periodic and by scaling $H$ and $\chi$ by a factor of $4\pi /\ell$, we can rescale the metric by the same overall factor to effectively set $\ell=4\pi$.  Hence without loss of generality we have
\[
H = c + \sum_{i=1}^n \frac{1}{  \sqrt{ \rho^2+ (z-z_i)^2}}
\]
where $c$ is a constant and integrating for $\chi$ we get
\[
\chi = \sum_{i=1}^n \frac{(z-z_i)}{  \sqrt{ \rho^2+ (z-z_i)^2}}
\]
where by shifting $\phi^2$ appropriately we have set an additive constant to zero.
If  $c=0$ vanishes the solution is ALE and is the multi-Eguchi-Hanson, whereas if $c>0$ the solution is ALF and it is the multi-Taub-NUT.
\end{proof}

We can extract the rod structure of the self-dual solutions as follows. From the above explicit solution $\chi_i = 2i-2-n$ for $i=1, \dots n+1$ and therefore we have the rod vectors $v_i=\partial_{\phi^1}- \chi_i \partial_{\phi^2}$ explicitly. If we choose $v_1$ and $v_2$ as a basis, then we can write a general rod vector as
\[
v_i = (2-i) v_1+ (i-1) v_2
\]
for $i=1, \dots, n+1$.  Therefore, choosing a basis $\underline{v}_1=(0,-1)$ and $\underline{v}_2=(1,0)$  we find $\underline{v}_i = (i-1, i-2)$ and in particular $\underline{v}_{n+1}= (n, n-1)$ so the cross-section as infinity is the lens space $L(n,1)$.

\appendix
\section{Alternate ALE model map}

In this appendix we will give an alternate model map for the ALE case. Thus first consider the flat asymptotic cone metric \eqref{eq:ALEflat}.
Weyl coordinates adapted to the Killing fields $\partial_{\bar{\phi_i}}$ are given by 
\[
\rho = \tfrac{1}{2} \bar r^2\sin 2\bar\theta, \qquad z= \tfrac{1}{2}\bar r^2 \cos 2\bar\theta
\]
in terms of which the metric is
\[
\bar{\mathbf{g}}= \frac{1}{2 \sqrt{\rho^2+z^2}} ( \td \rho^2+ \td z^2) + \mu^+ \td \bar{\phi}_1^2+ \mu^- \td \bar{\phi}_2^2
\]
where $\mu^\pm = \sqrt{\rho^2+ z^2} \mp z $ and $\bar r^2 = 2 \sqrt{\rho^2+z^2}$.

Now, introduce a basis defined by independently $2\pi$-periodic angles $\phi_1, \phi_2$, say $\bar{\phi}_1= p^{-1} \phi_1$  and $\bar{\phi}_2 = \phi_2 +   q p^{-1} \phi_1$. Then, in the basis $\partial_{\phi_i}$ the Gram matrix is 
\begin{equation}
g = L D_0  L^T , \qquad D_0= \begin{pmatrix}  \mu^+ & 0 \\ 0 & \mu^-
\end{pmatrix}, \qquad  L = \begin{pmatrix} p^{-1} & q p^{-1} \\ 0 & 1 \end{pmatrix} \; ,
\end{equation}
so in particular $\det g = p^{-2} \rho^2$ and 
\[
g|_{\rho=0, z<0} \begin{pmatrix}
    0 \\ 1
\end{pmatrix} = 0 , \qquad g|_{\rho=0, z>0} \begin{pmatrix}
    p \\ -q
\end{pmatrix} = 0  \; .
\]
Therefore, the two semi-infinite rods have rod vectors corresponding to a lens space $L(p,q)$.  

We are now ready to define the model map. Following the same setup and notation as in the main text, fix $z_1=0$ so $z_n>0$, then on $C_R, \mathcal{S}_1, \mathcal{S}_{n+2}$ we take the model map to be given by the above so the rod vectors on $I_1$ and $I_{n+1}$ are  $\underline{v}_1=(0,1)$ and $\underline{v}_{n+1}=(p,-q)$. On the regions $\mathcal{S}_{i=2, \dots, n+1}$ and the transition regions $\mathcal{T}_{i=2, \dots, n}$ we take the same model map as in the main text.

On the transition region $\mathcal{T}_1$, which separates $\mathcal{S}_1$ and $\mathcal{S}_2$,  we take 
\begin{equation}
    g = N_1(z) D_0^{1-\lambda(z)}  D_1^{\lambda(z)}  N_1(z)^T  , \qquad D_1= \begin{pmatrix}
        \mu_1^+ & 0 \\ 0 & \mu_1^-
    \end{pmatrix}\; ,
\quad 
N_1(z) = \begin{pmatrix}
    p^{-1} & \beta (z) \\ 0 & 1 
\end{pmatrix}
\end{equation}
where $\lambda(z), \beta(z)$ are smooth functions such that $0\leq \lambda(z) \leq 1$ and\footnote{Since we have fixed $z_1=0$ in fact $D_0=D_1$ and hence the $\lambda$ dependence drops out. We have chosen to write it in this way for clarity.}
\[
\lambda(z) = \begin{cases}
    0 \\1 
\end{cases}
\qquad \beta(z) = \begin{cases}
    qp^{-1} \\ 
    \epsilon_1 v^2_2
\end{cases}
\qquad \qquad \text{if} \qquad \begin{array}{c} z\in S_1 \\ z\in S_2 \end{array}  \; .
\]
Thus $N_1$ interpolates from $L$ to 
\[
M_1 = \begin{pmatrix}
    p^{-1} & \epsilon_1 v_2^2 \\ 0 & 1 
\end{pmatrix}
\]
where the latter is defined in \eqref{eq:modmapinterior} (and recall $v_1=(0,1)$ so by the admissibility condition $\epsilon_1 v_2^1=-1$). Hence $g$ interpolates from its value on $\mathcal{S}_1$ and $\mathcal{S}_2$, and furthermore,  $\det g = p^{-2} \rho^2$, $N_1^T \underline{v}_1 = \underline{v}_1$ so $v_1\in \text{ker}(g)_{\rho=0}$ on the transition region, as required.

On the transition region $\mathcal{T}_{n+1}$, which separates $\mathcal{S}_{n+1}$ and $\mathcal{S}_{n+2}$, we take
\begin{equation}
    g = N_{n+1}(z) D_0^{\lambda(z)} D_n^{1-\lambda(z)} N_{n+1}(z)^T, \qquad D_n= \begin{pmatrix}
        \mu_n^+ & 0 \\ 0 & \mu_n^-
    \end{pmatrix} \; ,
\quad N_{n+1}(z) = \begin{pmatrix}
        \frac{1}{pA}+ \frac{q C}{p} &\frac{q A}{p}  \\ C & A
    \end{pmatrix}
\end{equation}
where $\lambda(z), A(z), C(z)$ are smooth functions such that $0\leq 
\lambda(z)\leq 1$, $A(z)>0$ and
\[
\lambda(z) = \begin{cases}
    0 \\1 
\end{cases}
\qquad 
A(z) = \begin{cases}
    -\epsilon_n p \\ 1
\end{cases}
\qquad
C(z)= \begin{cases}
   -p^{-1} v_n^1  \\ 0 
\end{cases}
\qquad \qquad \text{if} \qquad \begin{array}{c} z\in S_{n+1} \\ z\in S_{n+2} \end{array}  \; .
\]
Thus $\det N_{n+1}=p^{-1}$ and $N_{n+1}$ smoothly interpolates between 
\[
M_n = \begin{pmatrix}
    p^{-1} v_n^2 & -\epsilon_n q \\ - p^{-1} v_n^1 & -\epsilon_n p
\end{pmatrix}
\]
which is defined in \eqref{eq:modmapinterior} (recall  $\underline{v}_{n+1}= (p, -q)$ and note that by the admissibility condition $\det M_n=1/p$) and $L$.  The requirement that $A>0$ implies we must fix $\epsilon_n=-1$.  Hence $g$ smoothly interpolates between its values on $\mathcal{S}_{n+1}$ and $\mathcal{S}_{n+2}$. Furthermore, note that $\det g= p^{-2} \rho^2$, and 
\[
N_{n+1}^T \begin{pmatrix}
    p \\ - q
\end{pmatrix} =A^{-1} \begin{pmatrix}
   1 \\ 0
\end{pmatrix}
\]
so $v_{n+1}\in \text{ker} g|_{\rho=0}$ on the whole transition region as required.  

Finally, we must check the tension on each of these transition regions. It is a straightforward calculation to check that as  $\rho\to 0$ on $\mathcal{T}_1$  one has
\[
E^1_{~1}=-E^2_{~2}= E^1_{~2}=O(\rho^2), \qquad E^2_{~1}= O(1) \; ,
\]
whereas on $\mathcal{T}_{n+1}$ one has $E^i_{~j}= O(1)$. Therefore, the norm of the tension $|\tau|^2 = \text{Tr} E^2$ is indeed bounded in these regions.


\begin{thebibliography}{99}

 
\bibitem{Aksteiner:2021fae}
S.~Aksteiner and L.~Andersson,
{\it Gravitational instantons and special geometry},
J. Diff. Geom. \textbf{128} (2024) no.3, 928-958.

\bibitem{Aksteiner:2023djq}
S.~Aksteiner, L.~Andersson, M.~Dahl, G.~Nilsson and a.~Simon,
{\it Gravitational instantons with $S^1$ symmetry},
[arXiv:2306.14567 [math.DG]].


\bibitem{Araneda:2025uqo}
B.~Araneda and J.~Lucietti,
{\it All toric Hermitian ALE gravitational instantons},
[arXiv:2510.09291 [math.DG]].



\bibitem{BeigI}
R. Beig, {\it The static gravitational field near spatial infinity I}. Gen. Rel. Grav. \textbf{12}, 439–451 (1980). 

\bibitem{BeigII}
R.~Beig and W.~Simon, {\it The stationary gravitational field near spatial infinity},
Gen. Rel. Grav. \textbf{12}, 1003-1013 (1980)



\bibitem{Biquard:2021gwj}
O.~Biquard and P.~Gauduchon,
{\it On Toric Hermitian ALF Gravitational Instantons,}
Commun. Math. Phys. \textbf{399} (2023) no.1, 389-422.


\bibitem{BKN}
S.~Bando, A.~Kasue, and H.~Nakajima, 
{\it On a construction of coordinates at infinity on manifolds with fast curvature decay and maximal volume growth}, Inventiones mathematicae, 97(2), 313-349 (1989).


\bibitem{ChenChen}
G. Chen and X. Chen, {\it Gravitational instantons with faster than quadratic curvature decay (I)}, 
Acta Math., 227 (2021), 263–307.

\bibitem{Chen:2011tc}
Y.~Chen and E.~Teo,
{\it A New AF gravitational instanton},
Phys. Lett. B \textbf{703} (2011), 359-362.

\bibitem{Chen:2010zu}
Y.~Chen and E.~Teo,
{\it Rod-structure classification of gravitational instantons with} $U(1) \times U(1)$ {\it isometry},
Nucl. Phys. B \textbf{838} (2010), 207-237.



\bibitem{Gibbons}
G.~W.~Gibbons, 
{\it Gravitational instantons: A survey},  in: Osterwalder, K. (eds) Mathematical Problems in Theoretical Physics: Lecture Notes in Physics, vol 116. Springer, Berlin, Heidelberg .(1980).

\bibitem{Han:2022mmc}
Q.~Han, M.~Khuri, G.~Weinstein and J.~Xiong,
{\it Asymptotic Analysis of Harmonic Maps With Prescribed Singularities,}
[arXiv:2212.14826 [math.DG]].


\bibitem{Hawking:1976jb}
S.~W.~Hawking,
{\it Gravitational Instantons,}
Phys. Lett. A \textbf{60} (1977), 81.


\bibitem{Hollands:2007aj}
S.~Hollands and S.~Yazadjiev,
{\it Uniqueness theorem for 5-dimensional black holes with two axial Killing fields},
Commun. Math. Phys. \textbf{283}, 749-768 (2008).

\bibitem{Hollands:2008fm}
S.~Hollands and S.~Yazadjiev,
{\it A Uniqueness theorem for stationary Kaluza-Klein black holes,}
Commun. Math. Phys. \textbf{302} (2011), 631-674


\bibitem{Khuri:2022xdy}
M.~Khuri, M.~Reiris, G.~Weinstein and S.~Yamada,
{\it Gravitational solitons and complete Ricci flat Riemannian manifolds of infinite topological type},
Pure Appl. Math. Quart. \textbf{20} (2024) no.4, 1895-1921.


\bibitem{Kronheimer:1989zs}
P.~B.~Kronheimer,
{\it The construction of ALE spaces as hyper-K{\"a}hlerquotients},
J. Diff. Geom. \textbf{29} (1989) no.3, 665-683.

\bibitem{Kronheimer:1989pu}
P.~B.~Kronheimer,
{\it A Torelli type theorem for gravitational instantons},
J. Diff. Geom. \textbf{29} (1989) no.3, 685-697.

\bibitem{Kunduri:2021xiv}
H.~K.~Kunduri and J.~Lucietti,
{\it Existence and uniqueness of asymptotically flat toric gravitational instantons},
Lett. Math. Phys. \textbf{111} (2021) no.5, 133.



\bibitem{Lapedes:1980st}
A.~S.~Lapedes,
{\it Black Hole Uniqueness Theorems in Euclidean Quantum Gravity}, Phys. Rev. D \textbf{22} (1980), 1837.

\bibitem{LiALF}
Li, M., 2023. {\it Classification results for conformally K\" ahler gravitational instantons}, arXiv:2310.13197 [math.DG].

\bibitem{Li2023} 
M.~Li, 
{\it On 4-dimensional Ricci-flat ALE manifolds},
arXiv:2304.01609 [math.DG].


\bibitem{LiSun} 
M.~Li and S.~Sun, 
{\it Gravitational instantons and harmonic maps}, 
arXiv:2507.15284 [math.DG].


\bibitem{Mars:1998epf}
M.~Mars and W.~Simon,
{\it A proof of uniqueness of the Taub-bolt instanton},
J. Geom. Phys. \textbf{32} (1999), 211-226.

\bibitem{Minerbe:2010yrr}
V.~Minerbe,
Annales Sci. Ecole Norm. Sup. \textbf{43} (2010) no.6, 883-924
doi:10.24033/asens.2135

\bibitem{Minerbe} V.~Minerbe, {\it A mass for ALF manifolds} Comm. Math. Phys. \textbf{289},  (2009) 925--955. 



\bibitem{Minerbe:2009gj}
V.~Minerbe,
{\it Rigidity for Multi-Taub-NUT metrics}, J. Reine Angew. Math. {\bf 656} (2011), 47--58.


\bibitem{Nakajima}
H. Nakajima, 
{\it Self-duality of ALE Ricci-flat 4-manifolds and Positive Mass Theorem}, Advanced Studies in Pure Mathematics 18-1, 1990, pp. 385-396.

 
\bibitem{Nilsson:2023ina}
G.~Nilsson,
{\it Topology of toric gravitational instantons},
Differ. Geom. Appl. \textbf{96} (2024), 102171.

\bibitem{OR}
Orlik, P. and Raymond, F. {\it Actions of the torus on 4-manifolds I}, Transactions of the AMS 152, (1972).


\bibitem{SunZhang}
S. Sun and R. Zhang, {\it Collapsing geometry of hyperkähler 4-manifolds and applications}, Acta Mathematica 2024

\bibitem{wein}
G. Weinstein, {\it On rotating black holes in equilibrium in general relativity}, Commun. Pure Appl. Math. 43 (1990) 903-948 

\bibitem{Weinstein:2019zrh}
G.~Weinstein,
{\it Harmonic maps with prescribed singularities and applications in general relativity},
[arXiv:1902.01576 [gr-qc]].



\end{thebibliography}
\end{document}